\numberwithin{equation}{section}
\theoremstyle{plain}
\newtheorem{theorem}{Theorem}
\newtheorem{proposition}[theorem]{Proposition}
\begin{document}

\title[Constrained directed graphs]{Asymptotic structure and singularities 
in constrained directed graphs}

\author{DAVID ARISTOFF}

\author{LINGJIONG ZHU}
\address
{School of Mathematics\newline
\indent University of Minnesota-Twin Cities\newline
\indent 206 Church Street S.E.\newline
\indent Minneapolis, MN-55455\newline
\indent United States of America}
\email{
daristof@umn.edu
\\
zhul@umn.edu}

\date{2 June 2014. \textit{Revised:} 2 June 2014}

\subjclass[2000]{05C80, 82B26, 05C35}  
\keywords{dense random graphs, exponential random graphs, graph limits, entropy, phase transitions.}

\begin{abstract}
We study the asymptotics of large directed graphs, 
constrained to have certain densities of edges 
and/or outward $p$-stars. Our models are close 
cousins of exponential random graph models, 
in which edges and certain other subgraph densities are 
controlled by parameters. We find that large graphs have either uniform or bipodal structure. 
When edge density (resp. $p$-star density) is fixed and $p$-star density (resp. edge density) 
is controlled by a parameter, we find phase transitions corresponding to a change from uniform 
to bipodal structure. When both edge and $p$-star density are fixed, we find only bipodal 
structures and no phase transition.
\end{abstract}

\maketitle

\section{Introduction} 

In this article we study the asymptotics of large {directed} graphs 
with constrained densities of edges and outward directed $p$-stars. 
Large graphs are often modeled by probabilistic ensembles with one or 
more adjustable parameters; 
see e.g. Fienberg~\cite{FienbergI,FienbergII}, 
Lov\'{a}sz~\cite{Lovasz2009}
and Newman~\cite{Newman}. 
The {\em exponential random graph models} (ERGMs), in which 
parameters are used to 
tune the densities of edges and other subgraphs, are one such 
class of models; see e.g. Besag \cite{Besag}, Frank and Strauss~\cite{Frank}, 
Holland and Leinhardt \cite{Holland}, Newman~\cite{Newman},
Rinaldo et al.~\cite{Rinaldo}, Robins et al.~\cite{Robins}, 
Snijders et al.~\cite{Snijders}, Strauss \cite{Strauss}, and
Wasserman and Faust~\cite{Wasserman}.

It has been shown that in many 
ERGMs the subgraph densities actually {\em cannot} be tuned. 
For example, for the class of ERGMs parametrized by edges and $p$-stars, 
large graphs are essentially Erd\H{o}s-R\'{e}nyi 
for {all} values of the parameters. (See Chatterjee and Diaconis~\cite{ChatterjeeDiaconis} 
for more complete and precise statements.)
An alternative to ERGMs was introduced by Radin and Sadun~\cite{RadinII}, 
where instead of using parameters to control subgraph counts, 
the subgraph densities are controlled directly; 
see also Radin et al.~\cite{RadinIII}, Radin and Sadun~\cite{RadinIV}
and Kenyon et al.~\cite{Kenyon}. This is the approach we take in this article, 
with edges and (outward) $p$-stars as the subgraphs controlled directly. 
We also consider models which split the difference 
between the two approaches: edges (resp. $p$-stars) 
are controlled directly, while $p$-stars (resp. edges) 
are tuned with a parameter. See~\cite{MeiKenyon} for the 
undirected graph version of the latter.

We find that, in all our models, graphs have either 
uniform or bipodal structure as the number of nodes becomes 
infinite. Our approach, following 
refs.~\cite{Kenyon,RadinII,RadinIII,RadinIV},
is to study maximizers 
of the entropy or free energy of the model as the number 
of nodes becomes infinite. When we constrain both edge 
and $p$-star densities, we find only bipodal structure 
(except when the $p$-star density is fixed to be exactly equal 
to the $p$th power of the edge density). When we constrain 
either edge or $p$-star densities (but not both), we 
find both uniform and bipodal structures, with a sharp 
change at the interface. This is in contrast with the situation in the ERGM 
version of the model, in which one finds 
only uniform structures, albeit with sharp changes in 
their densities along a certain curve in parameter space; 
see Aristoff and Zhu~\cite{AristoffZhu}.

These sharp changes along interfaces 
are called {\em phase transitions}.  
Phase transitions have recently been proved rigorously for ERGMs;  
see e.g. Yin~\cite{Yin} and especially Radin and Yin~\cite{Radin} 
for a precise definition of the term. Some earlier works using mean-field
analysis and other approximations include H\"{a}ggstr\"{o}m and Jonasson~\cite{Haggstrom} and
Park and Newman~\cite{ParkI,ParkII}. 
The terminology is apt, in that ERGMs and our models are inspired 
by certain counterparts in statistical physics: 
respectively, the {\em grand canonical ensemble}, and 
the {\em microcanonical} and {\em canonical ensembles}. 
See the discussion in Section~\ref{MODELS}.

Our directed graph models are simpler than their 
undirected counterparts. In particular, we can 
rigorously identify the asymptotic structures at all 
parameters, while in analogous models for undirected 
graphs, only partial results are known~\cite{Kenyon}. 
Our analysis, however, does not easily extend 
to directed random graph models where other 
subgraphs, like triangles, cycles, etc., instead 
of outward directed $p$-stars, are constrained.

This article is organized as follows. In Section~\ref{MODELS}, 
we describe our models and compare them with their statistical 
physics counterparts. In Section~\ref{RESULTS}, we state our main results. In 
Section~\ref{LDP}, we prove a large deviations 
principle for edge and $p$-star densities. 
We use the large deviations principle to 
give proofs, in Section~\ref{PROOFS}, of our 
main results.

\section{Description of the models}\label{MODELS}

A directed graph on $n$ nodes will be 
represented by a matrix 
$X = (X_{ij})_{1\le i,j\le n}$, where 
$X_{ij} = 1$ if there is a directed 
edge from node $i$ to node $j$, and $X_{ij} = 0$ 
otherwise. For simplicity, we allow for 
$X_{ii} = 1$, though this will not 
affect our results. Let $e(X)$ (resp. $s(X)$) be
the directed edge and outward $p$-star homomorphism 
densities of $X$:
\begin{align}\begin{split}
&e(X):=n^{-2}\sum_{1\le i,j\le n} X_{ij},\\
&s(X):=n^{-p-1} \sum_{1\leq i,j_{1},j_{2},\ldots,j_{p}\leq n}X_{ij_{1}}X_{ij_{2}}\cdots X_{ij_{p}}.
\end{split}
\end{align}
Here, $p$ is an integer $\ge 2$. The reason for the term 
homomorphism density is as follows. 
For a given graph $X$, if $\hbox{hom}_e(X)$ (resp. $\hbox{hom}_s(X)$) 
are the number of homomorphisms -- edge-preserving maps -- 
from a directed edge (resp. outward 
$p$-star) into $X$, then 
\begin{equation*}
e(X) = \frac{\hbox{hom}_e(X)}{n^2},\qquad
s(X) = \frac{\hbox{hom}_s(X)}{n^{p+1}},
\end{equation*}
with the denominators giving the total number of maps 
from a directed edge (resp. outward $p$-star) into $X$.

Let ${\mathbb P}_n$ be the uniform probability measure 
on the set of directed graphs on $n$ nodes. Thus, ${\mathbb P}_n$ 
is the uniform probability measure on the set of $n\times n$ matrices 
with entries in $\{0,1\}$. For $e,s \in [0,1]$ and $\delta > 0$, define
\begin{equation*}
\psi_n^{\delta}(e,s) = \frac{1}{n^2}\log {\mathbb P}_n\left(e(X) \in (e-\delta,e+\delta), \,
s(X) \in (s-\delta,s+\delta)\right).
\end{equation*}
Throughout, $\log$ is the natural logarithm, and we use the convention $0 \log 0 = 0$.
We are interested in the limit
\begin{equation}\label{micropsi}
\psi(e,s) := \lim_{\delta \to 0^+} \lim_{n\to \infty}  \psi_n^{\delta}(e,s).
\end{equation}
The function in~\eqref{micropsi} will be called the {\em limiting 
entropy density}. This is the directed graph version of 
the quantity studied in~\cite{Kenyon}. 
See also~\cite{RadinII,RadinIII,RadinIV} for 
related work where triangles are constrained instead of $p$-stars. 

For $\beta_1,\beta_2 \in {\mathbb R}$, 
$e,s \in [0,1]$ and $\delta > 0$, define
\begin{align*}
&\psi_n^{\delta}(e,\beta_2) = 
\frac{1}{n^2}\log {\mathbb E}_n\left[e^{n^{2}\beta_2 s(X)}\,1_{\{e(X) \in (e-\delta,e+\delta)\}}\right],\\
&\psi_n^{\delta}(\beta_1,s) = 
\frac{1}{n^2}\log {\mathbb E}_n\left[e^{n^{2}\beta_1 e(X)}\,1_{\{s(X) \in (s-\delta,s+\delta)\}}\right].\\
\end{align*}
We will also be interested in the limits
\begin{align}\begin{split}\label{canonpsi}
&\psi(e,\beta_2) := \lim_{\delta \to 0^+} \lim_{n\to \infty} \psi_n^{\delta}(e,\beta_2),\\
&\psi(\beta_1,s) := \lim_{\delta \to 0^+} \lim_{n\to \infty} \psi_n^{\delta}(\beta_1,s).
\end{split}
\end{align}
The quantities in~\eqref{canonpsi} will be called 
{\em limiting free energy densities}. They are 
the directed graph versions of the quantities studied 
in~\cite{MeiKenyon}.
We abuse notation by using the same symbols $\psi$ and 
$\psi_n^{\delta}$ to 
represent different functions in~\eqref{micropsi} and~\eqref{canonpsi} 
(and in~\eqref{ERGM} below), 
but the meaning will be clear from the arguments 
of these functions, which 
will be written
$(e,s)$, $(e,\beta_2)$, $(\beta_1,s)$ (or $(\beta_1,\beta_2)$ as below). When it is clear which function we 
refer to, we may simply write $\psi$ without 
any arguments.

We show that the limits in~\eqref{micropsi} and~\eqref{canonpsi} exist 
by appealing to the variational principles 
in Theorem~\ref{varpsimicro} and 
Theorem~\ref{varpsicanon} below. Maximizers of 
these variational problems are associated with the 
asymptotic structure of the associated constrained 
directed graphs. More precisely, as $n \to \infty$ a typical 
sample $X$ from the uniform measure ${\mathbb P}_n$ 
conditioned on the event $e(X) = e$, $s(X) = s$ 
looks like the maximizer of the variational 
formula for~\eqref{micropsi} in Theorem~\ref{varpsimicro}. Similarly, as $n \to \infty$, 
a typical sample from the probability measure which 
gives weight $\exp[n^2 \beta_2 s]$ (resp. 
$\exp[n^2 \beta_1 e]$) to graphs 
$X$ on $n$ nodes with $s(X) = s$ (resp. 
$e(X) = e$), when conditioned 
on the event $e(X) = e$ (resp. $s(X) = s$),  
looks like the maximizer of the 
variational formula for~\eqref{canonpsi} 
in Theorem~\ref{varpsicanon}. 
Thus, in~\eqref{micropsi} we have 
constrained $e(X)$ and $s(X)$ directly, 
while in~\eqref{canonpsi} we control 
one of $e(X)$ or $s(X)$ with a 
parameter $\beta_1$ or $\beta_2$. 
See~\cite{Kenyon,MeiKenyon,Radin,RadinII, RadinIII, RadinIV} 
for discussions and related work in the undirected graph 
setting. Closely related to~\eqref{micropsi} 
and~\eqref{canonpsi} is the limit
\begin{equation}\label{ERGM}
\psi(\beta_1,\beta_2) := \lim_{n\to \infty} \frac{1}{n^2}
\log {\mathbb E}\left[e^{n^{2}(\beta_1 e(X)+\beta_2 s(X))}\right],
\end{equation}
which was studied 
extensively in Aristoff and Zhu~\cite{AristoffZhu}. 
In this {\em exponential random graph model} (ERGM) both 
$e(X)$ and $s(X)$ are controlled by parameters $\beta_1$ and $\beta_2$. 
There it was shown that $\psi(\beta_1,\beta_2)$  
is analytic except along a certain phase transition 
curve $\beta_2 = q(\beta_1)$, corresponding to 
an interface across which the edge density changes sharply.
See Radin and Yin~\cite{Radin} for similar results, 
which use a similar variational 
characterization, in the 
undirected graph setting. The results 
in~\cite{AristoffZhu} and~\cite{Radin} will 
be useful in our analysis. This is because the Euler-Lagrange equations 
associated with the variational formulas 
for~\eqref{micropsi},~\eqref{canonpsi} 
are the same as the analogous variational 
formula for~\eqref{ERGM}. 
The crucial difference in 
the optimization problems is 
that for the ERGM, 
$\beta_1$ and $\beta_2$ 
are fixed parameters, while in our models 
one or both of $\beta_1,\beta_2$ is a 
free variable (i.e., a Lagrange multiplier). Thus, some solutions 
to the ERGM variational problem may have 
no relevance to 
our models. However, many of the calculations 
can be carried over, which is why we 
can make extensive use of the results 
in~~\cite{AristoffZhu} and~\cite{Radin}.

This article focuses on computing the 
limiting entropy density and free energy densities 
in~\eqref{micropsi} and~\eqref{canonpsi}, along 
with the corresponding maximizers in the variational 
formulas of Theorem~\ref{varpsimicro} and Theorem~\ref{varpsicanon}. 
In statistical physics modeling there is a hierarchy 
analogous to~\eqref{micropsi}-~\eqref{canonpsi}-~\eqref{ERGM}, 
with (for example) particle density and energy density in place of 
$e(X)$ and $s(X)$, and temperature and chemical potential in place 
of $\beta_1$ and $\beta_2$. The statistical physics versions of
~\eqref{micropsi}-~\eqref{canonpsi}-~\eqref{ERGM} correspond 
to the 
{\em microcanonical}, {\em canonical} and {\em grand canonical 
ensembles}, respectively. In that setting, 
there are curves like $q$ along which the free energy 
densities are not analytic, and these correspond to physical 
phase transitions, for example the familiar solid/liquid and liquid 
gas transitions; see Gallavotti~\cite{Gallavotti}. (There is no proof of this statement, 
though it is widely believed; see however Lebowitz et al.~\cite{Lebowitz}.) We find  
singularities in $\psi(e,\beta_2)$ and $\psi(\beta_1,s)$,
but not in $\psi(e,s)$. (In~\cite{AristoffZhu} 
we have shown $\psi(\beta_1,\beta_2)$ has singularities 
as well.) See Radin~\cite{Radin} and
Radin and Sadun~\cite{RadinIV} for discussions 
of the relationship between these 
statistical physics models and 
some random graph models that 
closely parallel ours.

\section{Results}\label{RESULTS}

To state our results we need the following. 
For $x \in [0,1]$, define 
\begin{align*}
&\ell(x) = \beta_1 x + \beta_2 x^p - x \log x - (1-x) \log (1-x),\\
&I(x) = x \log x + (1-x) \log (1-x) + \log 2,
\end{align*}
with the understanding that $I(0) = I(1) = \log 2$.
Of course $\ell$ depends on $\beta_1$ and $\beta_2$, 
but we omit this to simplify notation. Clearly, 
$\ell$ and $I$ are analytic in $(0,1)$ and continuous on $[0,1]$. 
The function $\ell$ is essential to understanding 
the ERGM limiting free energy density~\cite{AristoffZhu,Radin}. 

\begin{theorem}[Radin and Yin~\cite{Radin}]\label{trans_curve}
For each $(\beta_1,\beta_2)$ 
the function $\ell$ has either one or two local maximizers.
There is a curve $\beta_2 = q(\beta_1)$, $\beta_1 \le \beta_1^c$, with the endpoint
\begin{equation*}
(\beta_{1}^{c},\beta_{2}^{c})=\left(\log(p-1) - \frac{p}{p-1},\frac{p^{p-1}}{(p-1)^p}\right),
\end{equation*}
such that off the curve and at the endpoint, $\ell$ has a 
unique global maximizer, while on the curve away from the 
endpoint, $\ell$ has two global 
maximizers $0 < x_1 < x_2 < 1$. We consider $x_1$ and $x_2$ as functions of $\beta_1$ (or $\beta_2$)
for $\beta_1 < \beta_1^c$ (or $\beta_2 > \beta_2^c$); $x_1$ (resp. $x_2$) is increasing (resp. decreasing) 
in $\beta_1$, with
\begin{align}\begin{split}\label{x1_lims}
&\lim_{\beta_1 \to -\infty} x_1 = 0, \quad 
\lim_{\beta_1 \to -\infty} x_2 = 1,\\
&\lim_{\beta_1 \to \beta_1^c} x_1 = \frac{p-1}{p} = 
\lim_{\beta_1 \to \beta_1^c} x_2.\end{split}
\end{align}
\end{theorem}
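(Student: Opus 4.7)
The plan is to classify critical points of $\ell$ via its first two derivatives and then identify the phase curve as the locus where two local maxima attain equal value.

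I would first compute
\[
\ell'(x) = \beta_1 + p\beta_2 x^{p-1} - \log\frac{x}{1-x},\qquad
\ell''(x) = p(p-1)\beta_2 x^{p-2} - \frac{1}{x(1-x)}.
\]
The equation $\ell''(x)=0$ rearranges to $\beta_2 = [p(p-1) x^{p-1}(1-x)]^{-1}$, and the right-hand side attains its unique minimum on $(0,1)$ at $x^{\ast}=(p-1)/p$ with value $p^{p-1}/(p-1)^p = \beta_2^c$. Thus for $\beta_2 \le \beta_2^c$ one has $\ell''\le 0$ on $(0,1)$, so $\ell$ is concave and has a unique maximizer; for $\beta_2>\beta_2^c$, $\ell''$ has exactly two zeros and is positive only between them, so $\ell'$ has the ``decreasing--increasing--decreasing'' shape. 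Combined with $\ell'(0^+)=+\infty$ and $\ell'(1^-)=-\infty$, this forces $\ell'$ to have at most three zeros and $\ell$ to have at most two local maxima, settling the first claim.

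Next I would define the phase curve as the locus in $\{\beta_2>\beta_2^c\}$ where $\ell$ has two distinct global maximizers $x_1<x_2$, characterized by the system $\ell'(x_1)=\ell'(x_2)=0$ and $\ell(x_1)=\ell(x_2)$. Since $\ell''(x_i)<0$ at each local maximum, the implicit function theorem yields a smooth one-parameter family, which I would parametrize by $\beta_1$. The endpoint is identified as the degenerate case $x_1=x_2=x^{\ast}$, governed by $\ell'(x^{\ast})=\ell''(x^{\ast})=0$: the second equation forces $x^{\ast}=(p-1)/p$ and $\beta_2=\beta_2^c$, while substituting into the first yields $\beta_1^c=\log(p-1)-p/(p-1)$.

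For the limits in \eqref{x1_lims}, as $\beta_1\to\beta_1^c$ the merging of the two local maxima into a common inflection point forces $x_1,x_2\to x^{\ast}=(p-1)/p$ by continuity and the double-root characterization above. For $\beta_1\to-\infty$, a direct asymptotic analysis of $\ell'(x_i)=0$ shows $x_1\to 0$ and, with $q(\beta_1)$ growing just fast enough to preserve $\ell(x_1)=\ell(x_2)$, $x_2\to 1$. Monotonicity is obtained by implicit differentiation along the curve: differentiating $\ell'(x_i)=0$ and using $\partial\ell'/\partial\beta_1=1$, $\ell''(x_i)<0$, together with the sign of $q'(\beta_1)$ forced by $\ell(x_1)=\ell(x_2)$, yields $dx_1/d\beta_1>0$ and $dx_2/d\beta_1<0$.

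The main obstacle is the global step: verifying that the transition set is a \emph{single} smooth curve terminating exactly at $(\beta_1^c,\beta_2^c)$ and extending uniquely to $\beta_1\to-\infty$, rather than having multiple components, branches, or interior degeneracies. This requires careful global tracking of the two branch heights $\ell(x_i(\beta_1,\beta_2))$ as $(\beta_1,\beta_2)$ ranges over $\{\beta_2>\beta_2^c\}$, ruling out additional crossings and confirming that the implicit one-parameter family covers the entire range $\beta_1<\beta_1^c$.
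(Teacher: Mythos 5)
Your local analysis is correct and follows the same route that underlies the cited result: the formulas for $\ell'$ and $\ell''$, the identification of $\beta_2^c$ as the minimum over $(0,1)$ of $[p(p-1)x^{p-1}(1-x)]^{-1}$ at $x=(p-1)/p$, the decreasing--increasing--decreasing shape of $\ell'$ giving at most two local maxima, and the endpoint obtained from the degenerate system $\ell'(x^*)=\ell''(x^*)=0$ all check out. Be aware, though, that the paper does not reprove this part of the statement: it imports the existence and global structure of the curve $q$ from Radin and Yin, and only supplies its own proof of the final monotonicity/limit assertions (Proposition~\ref{prop}), using the formula $q'(\beta_1)=-(x_2-x_1)/(x_2^p-x_1^p)$ from Theorem~\ref{trans_curve_2}.

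Two genuine gaps remain in your sketch. First, the step you flag as the ``main obstacle'' is the actual content of the theorem: that the equal-height locus is a \emph{single} curve $\beta_2=q(\beta_1)$ defined for every $\beta_1<\beta_1^c$, off which $\ell$ has a strict (hence unique) global maximizer. The implicit function theorem applied to $\ell'(x_1)=\ell'(x_2)=0$, $\ell(x_1)=\ell(x_2)$ only produces local arcs and cannot by itself exclude extra components or missing parameter values. The standard way to close this is a global monotonicity argument: fix $\beta_2>\beta_2^c$, let $y_1(\beta_1)<y_2(\beta_1)$ be the two local maximizers on the interval of $\beta_1$ where both exist, and set $\Delta(\beta_1)=\ell(y_2)-\ell(y_1)$; by the envelope theorem $\Delta'(\beta_1)=y_2-y_1>0$, so $\Delta$ is strictly increasing and has at most one zero, and a sign check at the two ends of the interval yields exactly one. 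This gives, for each $\beta_2>\beta_2^c$, a unique crossing, hence a single transition curve and the uniqueness of the global maximizer off it. Second, your monotonicity step is underdetermined as stated: knowing only the \emph{sign} of $q'(\beta_1)$ cannot give $dx_1/d\beta_1>0$ and $dx_2/d\beta_1<0$, because implicit differentiation of $\ell'(x_i)=0$ along the curve gives $dx_i/d\beta_1=\bigl(1+p\,q'(\beta_1)x_i^{p-1}\bigr)/(-\ell''(x_i))$, and with $q'<0$ the numerator could a priori have either sign. You need the exact value $q'(\beta_1)=-(x_2-x_1)/(x_2^p-x_1^p)$ (obtained by differentiating the equal-height condition along the curve and using $\ell'(x_i)=0$) together with the mean value inequality $p x_1^{p-1}<(x_2^p-x_1^p)/(x_2-x_1)<p x_2^{p-1}$, which makes the numerator positive at $x_1$ and negative at $x_2$; this is precisely the computation carried out in Proposition~\ref{prop}.
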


\begin{theorem}[Aristoff and Zhu~\cite{AristoffZhu}]\label{trans_curve_2}
The curve $q$ in Theorem \ref{trans_curve} is continuous, decreasing, convex, and analytic 
for $\beta_1 < \beta_1^c$, with 
\begin{equation*}
q'(\beta_1) = - \frac{x_2-x_1}{x_2^p-x_1^p}.
\end{equation*}
Moreover, $x_1$ and $x_2$ are analytic in $\beta_1$ and $\beta_2$.
\end{theorem}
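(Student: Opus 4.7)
My plan is to combine the analytic implicit function theorem (IFT) with the envelope theorem, exploiting that the transition curve is the locus $\ell(x_1) = \ell(x_2)$ where $x_1, x_2$ are the two strict local maxima of $\ell$ from Theorem~\ref{trans_curve}.

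First I would show that $\ell''(x_i) < 0$ strictly for $\beta_1 < \beta_1^c$ on (and near) the curve, so the analytic IFT can be applied to $\partial_x \ell(x;\beta_1,\beta_2) = 0$ at each $x_i$. Non-degeneracy follows from a Rolle/counting argument: the three distinct critical points $x_1 < x_* < x_2$ of $\ell$ force $\ell''$ to vanish at least twice in $(x_1, x_2)$; conversely, $\ell''(x) = 0$ is equivalent to $p(p-1)\beta_2\, x^{p-1}(1-x) = 1$, which has at most two solutions in $(0,1)$ since $x^{p-1}(1-x)$ is unimodal. Hence the zeros of $\ell''$ lie strictly in $(x_1, x_*) \cup (x_*, x_2)$, so $\ell''(x_i) < 0$. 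The IFT then delivers $x_1, x_2$ as jointly real-analytic functions of $(\beta_1,\beta_2)$ in a neighborhood of the curve, proving the final assertion.

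Next I would establish analyticity of $q$ and derive $q'$. Put
\[
g(\beta_1,\beta_2) := \ell(x_1(\beta_1,\beta_2);\beta_1,\beta_2) - \ell(x_2(\beta_1,\beta_2);\beta_1,\beta_2),
\]
analytic near the curve and vanishing exactly on it. The envelope theorem (using $\partial_x \ell|_{x_i} = 0$) gives
\[
\partial_{\beta_1} g = x_1 - x_2, \qquad \partial_{\beta_2} g = x_1^p - x_2^p,
\]
both nonzero since $x_1 < x_2$. The analytic IFT yields $\beta_2 = q(\beta_1)$ real-analytic with
\[
q'(\beta_1) = -\frac{\partial_{\beta_1} g}{\partial_{\beta_2} g} = -\frac{x_2 - x_1}{x_2^p - x_1^p} < 0,
\]
giving the derivative formula, continuity, and strict monotonicity.

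Finally, for convexity I would compute $q''$ directly. Implicit differentiation of $\partial_x \ell(x_i;\beta_1,q(\beta_1)) = 0$ along the curve gives $dx_i/d\beta_1 = (1 + p\, q'(\beta_1)\, x_i^{p-1})/(-\ell''(x_i))$. Differentiating $\ell(x_1) = \ell(x_2)$ twice along the curve, and simplifying via the envelope theorem, leads to
\[
q''(\beta_1) = \frac{(1 + p q' x_1^{p-1})^2/(-\ell''(x_1)) - (1 + p q' x_2^{p-1})^2/(-\ell''(x_2))}{x_2^p - x_1^p}.
\]
The main obstacle is verifying this is non-negative. I would use the integral representation $1 + pq'(\beta_1)x_1^{p-1} = (x_2^p - x_1^p)^{-1}\int_{x_1}^{x_2} p(p-1)s^{p-2}(x_2-s)\,ds$ (and its analog at $x_2$, with $(s - x_1)$ in place of $(x_2-s)$), and eliminate $\beta_2$ from $-\ell''(x_i) = 1/(x_i(1-x_i)) - p(p-1)\beta_2 x_i^{p-2}$ via the critical-point equations $\ell'(x_i) = 0$; convexity of $x \mapsto x^p$ for $p \ge 2$ should then pin down the sign. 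As a sanity check, for $p = 2$ the symmetry $x \leftrightarrow 1 - x$ of $\ell$ forces $x_1 + x_2 = 1$ on the curve, so $q(\beta_1) = -\beta_1$ is affine and $q'' \equiv 0$; for $p \ge 3$ one expects strict convexity.
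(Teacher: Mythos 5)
Most of your proposal is sound and, for the parts this paper actually proves, it follows the same route as the paper: Theorem~\ref{trans_curve_2} is imported from~\cite{AristoffZhu}, and the analyticity claims are established here in Proposition~\ref{prop} by exactly your mechanism — the analytic implicit function theorem applied to $\ell'(x_i)=0$ at nondegenerate maximizers, followed by the envelope identity applied to $\ell(x_1)-\ell(x_2)=0$, whose $\beta_2$-derivative is $x_1^p-x_2^p\neq 0$; your formula $q'=-(x_2-x_1)/(x_2^p-x_1^p)$ and the monotonicity of $q$ come out of this correctly. Your nondegeneracy argument ($\ell''(x)=0$ is equivalent to $p(p-1)\beta_2 x^{p-1}(1-x)=1$, which has at most two roots by unimodality, while Rolle between the three critical points forces both roots into $(x_1,x_2)$, hence $\ell''(x_i)<0$) is a nice self-contained substitute for the paper's citation of Proposition~11 of~\cite{AristoffZhu}.

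The genuine gap is convexity, which is one of the assertions of the theorem and is precisely the part you leave at the level of ``should then pin down the sign.'' Your formula
\begin{equation*}
q''(\beta_1)\,(x_2^p-x_1^p)=\frac{(1+pq'x_1^{p-1})^2}{-\ell''(x_1)}-\frac{(1+pq'x_2^{p-1})^2}{-\ell''(x_2)}
\end{equation*}
is correct, but note that $1+pq'x_1^{p-1}>0$ while $1+pq'x_2^{p-1}<0$, so the right-hand side is a difference of two strictly positive quantities and its sign is genuinely competitive: in your own $p=2$ sanity check the two terms cancel identically (affine $q$), so there is no slack, and no soft estimate based merely on ``convexity of $x\mapsto x^p$'' can settle it — the argument must use the specific relation between $x_1$, $x_2$ and $\beta_2$ coming from $\ell'(x_i)=0$ with exact equality at $p=2$. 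Equivalently, $-1/q'$ is the average of $ps^{p-1}$ over $[x_1,x_2]$, and convexity amounts to this average being monotone as the interval shrinks with $\beta_1$, which requires comparing the rates $\partial x_1/\partial\beta_1$ and $\partial x_2/\partial\beta_1$ quantitatively. The paper does not reprove this delicate step; it is taken from~\cite{AristoffZhu}, where it is carried out by an explicit computation. As written, your proposal therefore establishes continuity, monotonicity, analyticity of $q$, $x_1$, $x_2$, and the derivative formula, but not convexity.
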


The curve in Theorem~\ref{trans_curve} will be called the 
{\em phase transition curve}, and its endpoint the {\em critical point}. 
Theorem~\ref{trans_curve} and Theorem~\ref{trans_curve_2} will be used extensively in most of our proofs; because of this,  
we will often not refer to it explicitly. (We comment that 
the last statement of Theorem~\ref{trans_curve}, not made explicit 
in~\cite{Radin} and~\cite{AristoffZhu}, is 
proved in Proposition~\ref{prop} in Section~\ref{PROOFS} below.) 
We will sometimes write $x_1 = x_2 = (p-1)/p$ for the 
local maximizer of $\ell$ at the critical point. 
Note that the last part of the theorem implies
\begin{equation*}
 0 < x_1 \le \frac{p-1}{p} \le x_2 < 1.
\end{equation*}

Our first main 
result concerns the limiting free energy density 
$\psi(e,s)$.

\begin{theorem}\label{psimicro}
The limiting entropy density $\psi = \psi(e,s)$ is analytic in 
\begin{equation*}
 D:= \{(e,s)\,:\, e^p < s < e,\,0 \le e \le 1\}
\end{equation*}
and equals $-\infty$ outside ${\bar D}$. 
With 
\begin{equation*}
(e^c,s^c) = \left(\frac{p-1}{p},\left(\frac{p-1}{p}\right)^p\right)
\end{equation*}
and 
\begin{equation*}
 D^{up} = \{(e,s)\,:\, s = e,\,0\le e \le 1\},
\end{equation*}
we have the semi-explicit formula 
\begin{equation}\label{psiformula}
\psi(e,s) = \begin{cases} -\frac{x_2-e}{x_2-x_1}I(x_1) 
- \frac{e-x_1}{x_2-x_1}I(x_2), & (e,s) \in D \\
-I(e), & (e,s) \in \partial D \setminus D^{up}\\
-\log 2, & (e,s) \in D^{up}\end{cases},
\end{equation}
where  $0< x_1 < x_2<1$ are 
the unique global maximizers of $\ell$ along the 
phase transition curve $q$ which satisfy 
\begin{equation*}
 \frac{x_2-e}{x_2-x_1}x_1^p + 
 \frac{e-x_1}{x_2-x_1}x_2^p = s.
\end{equation*} 
Moreover, $\psi$ is continuous on ${\bar D}$, 
and the first order partial derivatives of $\psi$ 
are continuous on ${\bar D}\setminus D^{up}$ 
but diverge on $D^{up}$.
When $p=2$, we have the explicit formula
\begin{equation*}
 \psi(e,s) = -I\left(\frac{1}{2}+\sqrt{s - e + \frac{1}{4}}\right), \qquad (e,s) \in {\bar D}.
\end{equation*}
\end{theorem}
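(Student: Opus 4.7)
The plan is to combine the variational formula in Theorem~\ref{varpsimicro} (which will follow from the large deviations principle of Section~\ref{LDP}) with the structural results of Theorems~\ref{trans_curve} and~\ref{trans_curve_2}. Concretely, I expect $\psi(e,s)$ to equal the supremum of the entropy functional $-\int_0^1\int_0^1 I(W(x,y))\,dx\,dy$ over measurable $W : [0,1]^2 \to [0,1]$ subject to $\int\int W = e$ and $\int_0^1 \bigl(\int_0^1 W(x,y)\,dy\bigr)^p dx = s$. Outside $\bar D$ the feasible set is empty: $W \le 1$ forces $s \le e$, while Jensen applied to $g(x) := \int_0^1 W(x,y)\,dy$ gives $\int g^p \ge (\int g)^p = e^p$, forcing $s \ge e^p$; hence $\psi = -\infty$ there.

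The crux is a one-dimensional reduction. Both constraints depend only on $g$, and convexity of $I$ yields $\int_0^1 I(W(x,y))\,dy \ge I(g(x))$ for each $x$, so replacing $W(x,y)$ by $g(x)$ preserves the constraints and raises the entropy. The reduced problem is to maximize $-\int_0^1 I(g)$ over $g : [0,1] \to [0,1]$ with $\int g = e$, $\int g^p = s$. The Lagrangian at multipliers $(\beta_1,\beta_2)$ has integrand $\ell(g(x)) - \log 2$, so pointwise optimization forces $g$ to take values in the set of global maximizers of $\ell$. For $(e,s) \in D$ both constraints bind nontrivially, so $\ell$ must have two distinct global maximizers $x_1 < x_2$; by Theorem~\ref{trans_curve} this occurs exactly on the phase transition curve. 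Writing $\alpha = |\{g = x_2\}|$, the constraints give $\alpha = (e-x_1)/(x_2-x_1)$ together with a single compatibility equation that uniquely fixes $(x_1,x_2)$ on the curve; substitution into $-\alpha I(x_2) - (1-\alpha)I(x_1)$ yields~\eqref{psiformula}. The boundaries degenerate: on $\partial D \setminus D^{up}$ (i.e.\ $s = e^p$) Jensen saturates, so $g \equiv e$ and $\psi = -I(e)$; on $D^{up}$ ($s = e$) one has $x_1=0$, $x_2=1$, $\alpha = e$, giving $\psi = -\log 2$.

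For regularity, introduce the analytic parametrization $\Phi(\beta_1,\alpha) := (\alpha x_2 + (1-\alpha)x_1,\, \alpha x_2^p + (1-\alpha)x_1^p)$ with $x_i = x_i(\beta_1, q(\beta_1))$; Theorem~\ref{trans_curve_2} gives analyticity of $\Phi$ on $(-\infty,\beta_1^c)\times(0,1)$, and a Jacobian computation (using $x_1 < x_2$, $x_1'>0$, $x_2'<0$, and the elementary inequality $p(x_2-x_1)x_2^{p-1} > x_2^p - x_1^p > p(x_2-x_1)x_1^{p-1}$) shows $\Phi$ is a local diffeomorphism; together with the limits~\eqref{x1_lims} and the monotonicity of $x_1,x_2$ one verifies $\Phi$ maps bijectively onto $D$, so $\psi$ is analytic on $D$. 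Continuity on $\bar D$ follows from the same boundary limits; the envelope theorem identifies $\partial_e\psi = \beta_1$ and $\partial_s\psi = \beta_2$, which extend continuously to $\bar D\setminus D^{up}$ and diverge on $D^{up}$ (where $\beta_1\to-\infty$ and $\beta_2\to+\infty$ along $q$). For $p = 2$, the identity $I(1-x)=I(x)$, combined with $\ell'(x_1)=\ell'(x_2)=0$ and $\ell(x_1)=\ell(x_2)$, forces $x_1 = 1-x_2$ on the curve, reducing the $p$-star constraint to $x_2^2 - x_2 + (e-s) = 0$ with root $x_2 = \tfrac12 + \sqrt{s-e+\tfrac14}$; since $I(x_1) = I(x_2)$, the bipodal entropy collapses to $\psi = -I(x_2)$.

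The main obstacle will be the Jensen reduction step: rigorously justifying that the supremum over general directed graphons equals the supremum over $g$ depending only on $x$, and treating the boundary values $g \in \{0,1\}$ carefully. Once this is in place, the remaining analysis is standard convex optimization plus the implicit function theorem, leveraging the ERGM results of~\cite{AristoffZhu,Radin}.
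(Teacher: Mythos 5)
Your proposal is essentially correct and, after the opening step, runs parallel to the paper's proof; the main discrepancy is at the start, where the obstacle you single out does not actually arise. The paper never works with directed graphons: the large deviation principle (Theorem~\ref{LDPThm}) is proved directly for the pair $(e(X),s(X))$ via Mogulskii's theorem applied to row sums, so Theorem~\ref{varpsimicro} already states the variational problem over one-variable functions $g:[0,1]\to[0,1]$ with $\int_0^1 g=e$, $\int_0^1 g^p=s$. Your Jensen reduction from $W(x,y)$ to $g(x)=\int_0^1 W(x,y)\,dy$ is therefore unnecessary, and starting from a directed-graphon variational formula would in fact be an unjustified step here (Chatterjee--Varadhan is only cited for the undirected case); you should simply start from Theorem~\ref{varpsimicro}. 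From that point on, your argument matches the paper's in substance: reduction to two-valued functions taking the values $x_1<x_2$ of the global maximizers of $\ell$ on the curve $q$; uniqueness of the representing chord by the non-crossing/sweeping argument (your bijectivity of $\Phi(\beta_1,\alpha)$ is exactly the paper's geometric proof, and your Jacobian inequality is the same mean-value estimate the paper uses; the paper additionally gives an algebraic IVT-plus-monotonicity proof); analyticity via the analytic implicit/inverse function theorem using $\partial x_1/\partial\beta_1>0$, $\partial x_2/\partial\beta_1<0$; identification $\partial\psi/\partial s=\beta_2$, $\partial\psi/\partial e=\beta_1$ (the paper verifies this by direct differentiation of \eqref{psiformula} rather than by an envelope-theorem citation, and you should be prepared to do the same computation); and the boundary/limit analysis for continuity and divergence of the first derivatives. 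Your $p=2$ derivation via the symmetry $x_1=1-x_2$ is a slightly slicker variant of the paper's explicit quadratic solution and is correct.

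One step needs tightening. As written, ``pointwise optimization forces $g$ to take values in the argmax of $\ell$'' is a first-order necessary condition on an optimizer, and the paper spends real effort legitimizing it: existence of maximizers via compactness of the quotient of $\mathcal G$ under the cut metric and continuity of the three functionals, then a multiplier rule (Clarke) including the abnormal case $\eta=0$, which is precisely what produces the degenerate boundary case $s=e^p$. You can bypass all of this by phrasing your Lagrangian observation as an upper bound: with $(\beta_1,\beta_2)$ the point on $q$ whose chord passes through $(e,s)$, every feasible $g$ satisfies $-\int_0^1 I(g)=\int_0^1\ell(g)-\beta_1 e-\beta_2 s-\log 2\le \max_{[0,1]}\ell-\beta_1 e-\beta_2 s-\log 2$, and the two-valued candidate attains this bound, so no a priori existence or multiplier theorem is needed --- but then the burden falls entirely on the existence and uniqueness of the chord through $(e,s)$, which is your $\Phi$-bijectivity claim and must be proved, not asserted. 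Either make that duality reading explicit or supply the compactness-plus-multiplier-rule argument; as it stands this is the one genuine soft spot in the proposal.
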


In Theorem~\ref{psimicro}, the formula is only semi-explicit 
because $x_1$ and $x_2$ depend on $e$ and $s$. 
Note that the formula depends {\em only} on 
$e$ and $s$. 
(To understand 
the dependence better, see the subsection 
{\em uniqueness of 
the optimizer: geometric proof} in the proof 
of Theorem~\ref{psimicro}.)

Note that we have not found any interesting singular behavior 
of $\psi$, in contrast with the recent results of 
Kenyon et. al.~\cite{Kenyon} 
for the undirected version of the model. On the other hand, 
we are able to prove that our graphs are bipodal, whereas 
in~\cite{Kenyon} it was proved only that the graphs 
are multipodal (though 
simulation evidence from the paper suggests bipodal structure).

\begin{figure}\label{fig1}
\vskip-165pt
\hspace*{-.5cm}\includegraphics[scale=0.66]{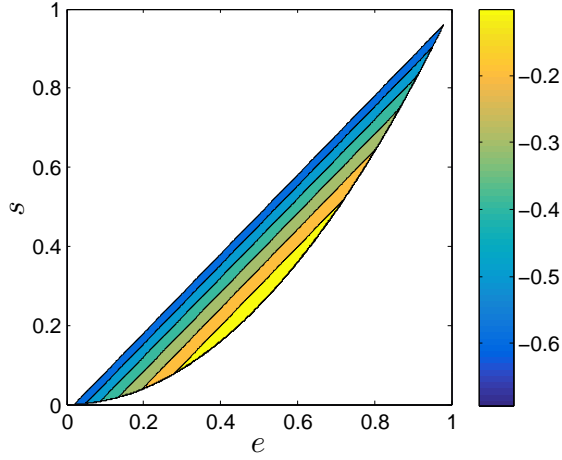}
\vskip-175pt
\caption{Countour plot of $\psi(e,s)$ for $p=2$. 
The non-shaded portion represents the complement 
of ${\bar D}$ (where by convention $\psi$ 
equals $-\infty$). Here, $(e^c,s^c) = (1/2,1/4)$.}
\end{figure}

Our next main result concerns $\psi(e,\beta_2)$ and $\psi(\beta_1,s)$.

\begin{theorem}\label{psicanon}
(i) There is a U-shaped region 
\begin{equation*}
U_e = \{(e,\beta_2)\,:\,x_1 < e < x_2,\,\beta_2 > \beta_2^c\}
\end{equation*}
whose closure has lowest point 
\begin{equation*}
 \left(e^c,\beta_2^c\right) = \left(\frac{p-1}{p}, \frac{p^{p-1}}{(p-1)^p}\right)
\end{equation*}
such that the limiting free energy density $\psi = \psi(e,\beta_2)$ is analytic outside $\partial U_e$. 
The limiting free energy density has the formula
\begin{equation}\label{psiforme}
\psi(e,\beta_{2})=
\begin{cases}
\beta_{2}e^{p}-I(e), &(e,\beta_{2})\in U_e^{c}
\\
\beta_{2}\left[\frac{x_2-e}{x_{2}-x_{1}}x_{1}^{p}
+\frac{e-x_1}{x_{2}-x_{1}}x_{2}^{p}\right]
\\
\qquad\qquad
-\left[\frac{x_2-e}{x_{2}-x_{1}}I(x_{1})
+\frac{e-x_1}{x_{2}-x_{1}}I(x_{2})\right], &(e,\beta_{2})\in U_e
\end{cases}
\end{equation}
where $0 < x_1 < x_2 <1$ are the global maximizers of $\ell$ at 
the point $(q^{-1}(\beta_2), \beta_2)$ on the phase transition curve. 
In particular, $\partial^2\psi/\partial e^2$ has jump discontinuities 
across $\partial U_e$ away from $(e^c, \beta_2^c)$, and 
$\partial^4\psi/\partial e^4$ is discontinuous 
at $(e^c, \beta_2^c)$.

(ii) There is a U-shaped region 
\begin{equation*}
U_s = \{(\beta_1,s)\,:\,x_1^p < s < x_2^p,\,\beta_1 < \beta_1^c\}
\end{equation*}
whose closure has rightmost point 
\begin{equation*}
 \left(\beta_1^c,s^c\right) = \left(\log (p-1) - \frac{p}{p-1}, \left(\frac{p-1}{p}\right)^p\right)
\end{equation*}
such that the limiting free energy density $\psi = \psi(\beta_1,s)$ is analytic 
outside $\partial U_s$. 
The limiting free energy density has the formula
\begin{equation}\label{psiforms}
\psi(\beta_{1},s)=
\begin{cases}
\beta_{1}s^{\frac{1}{p}}-I(s^{\frac{1}{p}}), &(\beta_{1},s)\in U_s^{c}
\\
\beta_{1}\left[\frac{x_{2}^{p}-s}{x_{2}^{p}-x_{1}^{p}}x_{1}
+\frac{s-x_{1}^{p}}{x_{2}^{p}-x_{1}^{p}}x_{2}\right]
\\
\qquad\qquad
-\left[\frac{x_{2}^{p}-s}{x_{2}^{p}-x_{1}^{p}}I(x_{1})
+\frac{s-x_{1}^{p}}{x_{2}^{p}-x_{1}^{p}}I(x_{2})\right], &(\beta_{1},s)\in U_s
\end{cases}
\end{equation}
where $0 < x_1 < x_2 <1$ are the global maximizers of $\ell$ at 
the point $(\beta_1,q(\beta_1))$ on the phase transition curve. 
In particular, $\partial^2\psi/\partial s^2$ has jump discontinuities 
across $\partial U_e$ away from $(\beta_1^c, s^c)$, and 
$\partial^4\psi/\partial s^4$ is discontinuous
at $(\beta_1^c, s^c)$.
\end{theorem}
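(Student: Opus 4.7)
The plan is to combine Theorem~\ref{varpsicanon} with Legendre duality and then use the ERGM information in Theorem~\ref{trans_curve} and Theorem~\ref{trans_curve_2} to analyze the resulting minimization; I work out part~(i) in detail, and part~(ii) is entirely parallel with the pairs $(e,\beta_1)$ and $(s,\beta_2)$ swapped. Theorem~\ref{varpsicanon} will express
\[
\psi(e,\beta_2)\;=\;\sup_{\mu}\Big\{\beta_2\!\!\int\! x^p\, d\mu \,-\!\!\int\! I(x)\, d\mu \;:\;\int x\, d\mu = e\Big\},
\]
where $\mu$ ranges over Borel probability measures on $[0,1]$ arising as limiting distributions of the normalized row sums of $X$. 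Introducing a Lagrange multiplier $\beta_1$ for the linear constraint and using the identity $\beta_1 x + \beta_2 x^p - I(x) = \ell(x) - \log 2$, strong duality together with the ERGM identity $\psi(\beta_1,\beta_2) = \max_x \ell(x) - \log 2$ of~\cite{AristoffZhu} yields
\[
\psi(e,\beta_2)\;=\;\inf_{\beta_1 \in \mathbb R}\big\{\psi(\beta_1,\beta_2) - \beta_1 e\big\},
\]
with the infimum attained for $e\in(0,1)$ by coercivity.

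Next, I identify the minimizer using Theorem~\ref{trans_curve}. The convex map $\beta_1\mapsto\psi(\beta_1,\beta_2)$ has slope equal to the global maximizer $x^*(\beta_1,\beta_2)$ of $\ell$, which is continuous and strictly increasing in $\beta_1$ except at $\beta_1 = q^{-1}(\beta_2)$ (present exactly when $\beta_2>\beta_2^c$), where it jumps from $x_1$ to $x_2$. Hence either the infimum is achieved at a smooth $\beta_1$ with $x^*(\beta_1,\beta_2) = e$, in which case $\mu=\delta_e$ and the first-order condition $\ell'(e)=0$ collapses the formula to the unimodal expression $\psi(e,\beta_2)=\beta_2 e^p - I(e)$; or, when $\beta_2>\beta_2^c$ and $e\in(x_1,x_2)$, the infimum is achieved at the kink $\beta_1^*=q^{-1}(\beta_2)$, and the optimal $\mu$ is the convex combination of $\delta_{x_1}$ and $\delta_{x_2}$ whose weights are uniquely determined by $\int x\, d\mu=e$. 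Averaging the two identities $\psi(\beta_1^*,\beta_2)-\beta_1^* e = \ell(x_j)-\log 2-\beta_1^* e$ for $j=1,2$ with these weights and invoking $\ell(x_1)=\ell(x_2)$ produces the bipodal formula in~\eqref{psiforme}. This identifies $U_e$ precisely; its vertex $(e^c,\beta_2^c)$ comes from the limits $\lim_{\beta_1\to\beta_1^c}x_1 = \lim_{\beta_1\to\beta_1^c}x_2 = (p-1)/p$ recorded in Theorem~\ref{trans_curve}.

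Analyticity away from $\partial U_e$ is then immediate: on $U_e^c$ the expression $\beta_2 e^p - I(e)$ is manifestly analytic in $(e,\beta_2)$ for $e\in(0,1)$, and on $U_e$ Theorem~\ref{trans_curve_2} gives analyticity of $x_1,x_2$ in $\beta_2$ along the curve $\beta_2=q(\beta_1)$, while the bipodal formula is affine in $e$ with coefficients analytic in $\beta_2$. Continuity across $\partial U_e$ follows from the bipodal weights collapsing onto $\delta_{x_j}$ at the boundary.

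Finally, the singular behavior is the main technical obstacle. The bipodal formula is affine in $e$ at fixed $\beta_2$, so $\partial^2\psi/\partial e^2\equiv 0$ on $U_e$; on $U_e^c$, differentiation yields $\partial^2\psi/\partial e^2 = p(p-1)\beta_2 e^{p-2} - 1/(e(1-e)) = \ell''(e)$, where $\ell''$ does not depend on $\beta_1$. At a generic boundary point $e = x_j$ the maximizer is nondegenerate, so $\ell''(x_j)<0$, producing a genuine jump. At $(e^c,\beta_2^c)=((p-1)/p,\ p^{p-1}/(p-1)^p)$ the two local maxima coalesce, $\ell''(e^c)=0$ kills the second-order jump, and one must go to fourth order; the key calculation is a careful Taylor expansion of $\ell$ at $e^c$ using the explicit value of $\beta_2^c$ (together with the behavior of $x_1,x_2$ as they merge, from Theorem~\ref{trans_curve_2}) to verify $\ell^{(4)}(e^c)\neq 0$ and hence the claimed discontinuity in $\partial^4\psi/\partial e^4$. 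Part~(ii) follows from the mirror construction: Legendre duality in $\beta_2$ gives $\psi(\beta_1,s)=\inf_{\beta_2}\{\psi(\beta_1,\beta_2)-\beta_2 s\}$, and since $\partial\psi/\partial\beta_2 = (x^*)^p$ has a jump from $x_1^p$ to $x_2^p$ across the phase-transition curve, the analogous analysis produces the U-shaped region $U_s$ and the claimed singular behavior.
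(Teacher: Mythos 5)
Your proposal is correct in substance but takes a genuinely different route from the paper. The paper works directly with the primal problem of Theorem~\ref{varpsicanon}: it shows optimizers are two-valued, derives the Euler--Lagrange conditions $\ell'(y_1)=\ell'(y_2)=0$, $\ell(y_1)=\ell(y_2)$, and then---this is the step your duality bypasses---compares the bipodal candidate against the uniform candidate $g\equiv e$ inside $U_e$ by showing that $H(e):=\beta_2 e^p - I(e) - \bigl(\hbox{bipodal value}\bigr)<0$ on $(x_1,x_2)$, using $H'=\ell'$, $H''=\ell''$ and the sign pattern of $\ell''$ from Proposition~11 of~\cite{AristoffZhu}. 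Your Legendre-duality route $\psi(e,\beta_2)=\inf_{\beta_1}\{\psi(\beta_1,\beta_2)-\beta_1 e\}$ makes that comparison automatic: for every feasible $g$ and every $\beta_1$ one has $\beta_2\int_0^1 g^p-\int_0^1 I(g)=\int_0^1[\beta_1 g+\beta_2 g^p-I(g)]-\beta_1 e\le \max_x\ell(x)-\log 2-\beta_1 e$, and your explicit measures $\delta_e$ (smooth case) and $\lambda\delta_{x_1}+(1-\lambda)\delta_{x_2}$ with $\lambda=(x_2-e)/(x_2-x_1)$ (kink case) attain the dual value, which is exactly~\eqref{psiforme}. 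This is a clean gain: the convexity/kink structure of $\beta_1\mapsto\psi(\beta_1,\beta_2)$ from Theorems~\ref{trans_curve} and~\ref{trans_curve_2} replaces the paper's $H$-function argument.

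Three points need tightening. First, do not invoke ``strong duality'' as a black box; as noted above you do not need it---weak duality plus attainment by your explicit feasible measures already yields equality (and likewise in part~(ii)). Second, the singularity claims still rest on facts about $\ell$ that you only assert or defer: $\ell''(x_i)<0$ at the global maximizers on the transition curve (for the jump in $\partial^2\psi/\partial e^2$), and $\ell''(e^c)=\ell'''(e^c)=0$ with $\ell^{(4)}(e^c)<0$ at the critical point (for the fourth-order discontinuity); in the paper these are imported from Proposition~11 of~\cite{AristoffZhu}, so either cite that or carry out the computation you sketch. Third, part~(ii) is not a literal mirror image for the regularity claims: on $U_s^c$, writing $t=s^{1/p}$, one has $\partial^2\psi/\partial s^2=\ell'(t)\,\partial^2 t/\partial s^2+\ell''(t)\left(\partial t/\partial s\right)^2$, and it is the vanishing of $\ell'(x_1)$ at the boundary (and of $\ell',\ell'',\ell'''$ at $e^c$) that kills the extra chain-rule terms and produces the stated limits $\ell''(x_1)\left(x_1^{1-p}/p\right)^2$ and $\ell^{(4)}(e^c)\left((e^c)^{1-p}/p\right)^4$; this should be spelled out rather than left to ``the analogous analysis.''
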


The sharp change 
in Theorem~\ref{psicanon} is called 
a {\em phase transition} (because of the 
singularity in the derivatives of $\psi$). 
The phase transition corresponds to a 
sharp change from uniform to bipodal 
structure of the optimizers of the 
variational problem for~\eqref{canonpsi};
see the discussion below Theorem~\ref{varpsicanon}. The 
bipodal structure in the regions $U_e$ and $U_s$ is 
sometimes called a {\em replica symmetry breaking phase}. 

In contrast with Theorem~\ref{psimicro}, the $x_1$ and $x_2$ in 
Theorem~\ref{psicanon} are functions of $\beta_1$ or 
$\beta_2$ only and do not depend on $e$ or $s$.
See Figure~1 for the region $D$ from Theorem~\ref{psimicro}, 
and Figure~2 for the U-shaped regions $U_e$ and $U_s$ 
in Theorem~\ref{psicanon}.

\begin{figure}\label{fig2}
\vskip-160pt
\hspace*{-1cm}\includegraphics[scale=0.65]{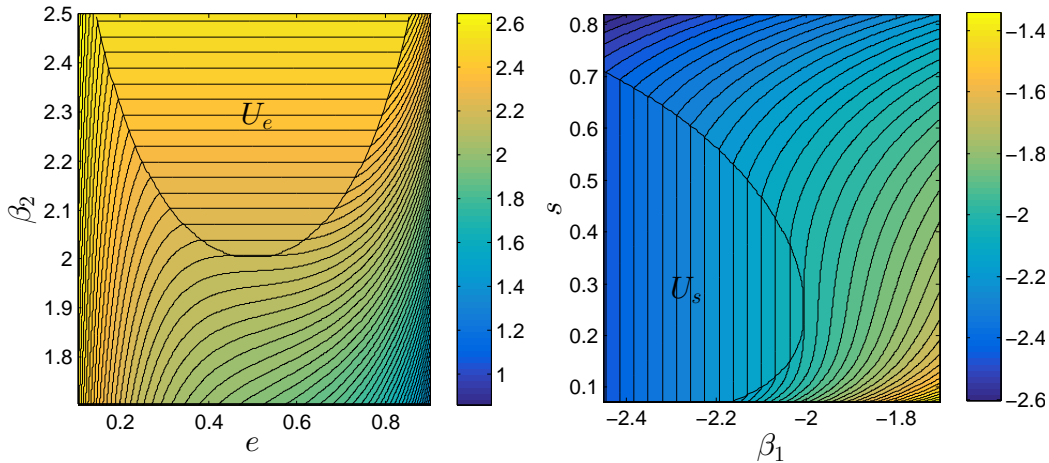}
\vskip-165pt
\caption{Contour plots of $\partial\psi(e,\beta_2)/\partial e$ 
(left) and $\partial\psi(\beta_1,s)/\partial s$ (right) for $p = 2$. 
Contour lines are included to emphasize features. 
The boundaries of the U-shaped regions, $\partial U_e$ and $\partial U_s$, are outlined. 
Here, $(e^c,\beta_2^c) = (1/2, 2)$ 
and $(\beta_1^c,s^c) = (-2,1/4)$.}
\end{figure}

\section{Large Deviations}\label{LDP}

We will need the following terminology before 
proceeding. A sequence 
$({\mathbb Q}_{n})_{n\in\mathbb{N}}$ of 
probability measures on a topological space $X$ 
is said to satisfy a {\em large deviation principle} 
with speed $a_{n}$ 
and rate function 
$J:X\to\mathbb{R}$ if $J$ is non-negative and 
lower semicontinuous, and for any measurable set $A$, 
\begin{equation*}
-\inf_{x\in A^{o}}J(x)\leq\liminf_{n\to\infty}\frac{1}{a_{n}}\log {\mathbb Q}_{n}(A)
\leq\limsup_{n\to\infty}\frac{1}{a_{n}}\log {\mathbb Q}_{n}(A)\leq-\inf_{x\in\bar{A}}J(x).
\end{equation*}
Here, $A^{o}$ is the interior of $A$ and $\bar{A}$ is its closure. 
See e.g. Dembo and Zeitouni~\cite{Dembo} or Varadhan~\cite{VaradhanII}.

We will equip the set $\mathcal G$ of measurable 
functions $[0,1] \to [0,1]$ with  
the {\em cut norm}, written $||\cdot||_\square$ and defined by 
\begin{equation}\label{cutnorm}
 ||g||_\square = \sup \left|\int_A g(x)\,dx\right|,
\end{equation}
where the supremum is taken over measurable subsets $A$ of $[0,1]$.

\begin{theorem}\label{LDPThm}
The sequence of probability measures 
${\mathbb P}_n\left(e(X) \in \cdot, \,
s(X) \in \cdot \right)$ 
satisfies a large deviation principle on the space $[0,1]^2$ with speed $n^{2}$
and rate function
\begin{equation*}
J(e,s)=\inf_{g\in\mathcal{G}_{e,s}}\int_{0}^{1} I(g(x))\,dx,
\end{equation*}
where ${\mathcal G}_{e,s}$ 
is the set of measurable functions 
$g:[0,1] \to [0,1]$ such that 
\begin{equation*}
\int_0^1 g(x)\,dx = e, \quad \int_0^1 g(x)^p\,dx = s.
\end{equation*}
By convention, $I(x) = \infty$ if 
$x \notin [0,1]$, and the infimum is $\infty$ 
if $\mathcal{G}_{e,s}$ is empty.
\end{theorem}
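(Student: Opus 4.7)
The plan rests on the fact that both $e(X)$ and $s(X)$ depend on $X$ only through the normalized out-degrees $d_i := n^{-1}\sum_j X_{ij}$, since $e(X) = n^{-1}\sum_i d_i$ and $s(X) = n^{-1}\sum_i d_i^p$. Under $\mathbb{P}_n$ the rows of $X$ are independent and each $n d_i$ is Binomial$(n,1/2)$, so the $d_i$ are i.i.d. I would therefore study the empirical out-degree measure $L_n := n^{-1}\sum_{i=1}^n \delta_{d_i}$, a random Borel probability measure on $[0,1]$. Since the map $\Phi:\mu\mapsto(\int x\,d\mu(x), \int x^p\,d\mu(x))$ is weakly continuous and $(e(X),s(X))=\Phi(L_n)$, an LDP for $L_n$ will yield the desired LDP by the contraction principle.

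The main step is to prove an LDP for $L_n$ at speed $n^2$, equipping the space of probability measures on $[0,1]$ with the weak topology, with rate function $\tilde J(\mu) = \int_0^1 I\,d\mu$. Stirling's formula provides the uniform estimate $\log\binom{n}{k} - n\log 2 = -nI(k/n) + O(\log n)$, and multiplying across rows gives, for any realization $(d_1,\ldots,d_n)\in\{0,1/n,\ldots,1\}^n$,
\begin{equation*}
 -\frac{1}{n^2}\log\mathbb{P}_n(d_1,\ldots,d_n) = \int_0^1 I\,dL_n + O\!\left(\tfrac{\log n}{n}\right).
\end{equation*}
For the upper bound on a closed set $A$, summing the single-tuple estimate over the at most $(n+1)^n = \exp(o(n^2))$ admissible tuples with $L_n\in A$ produces a combinatorial prefactor negligible at speed $n^2$. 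For the lower bound on an open set containing $\mu$, I would approximate $\mu$ weakly by discrete measures $\mu_n$ supported on $\{0,1/n,\ldots,1\}$ with masses in $n^{-1}\mathbb{Z}$ (for instance, using quantiles of $\mu$), realize each $\mu_n$ by at least one out-degree tuple, and then apply the single-tuple estimate together with continuity of $\mu\mapsto\int I\,d\mu$.

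Given the LDP for $L_n$, the contraction principle via $\Phi$ produces the LDP on $[0,1]^2$ with rate function
\begin{equation*}
 J(e,s) = \inf\!\left\{\int_0^1 I\,d\mu : \int_0^1 x\,d\mu(x)=e,\ \int_0^1 x^p\,d\mu(x)=s\right\}.
\end{equation*}
To match the form stated in the theorem, I would use the bijection between probability measures $\mu$ on $[0,1]$ and nondecreasing measurable functions $g:[0,1]\to[0,1]$ given by the quantile function: every $\mu$ is the pushforward of Lebesgue measure under its quantile function $g$, and conversely every measurable $g:[0,1]\to[0,1]$ pushes Lebesgue measure forward to a probability measure on $[0,1]$. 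The change of variables $\int\phi\,d\mu = \int_0^1 \phi(g(x))\,dx$ converts the constraints to $g\in\mathcal{G}_{e,s}$ and the objective to $\int_0^1 I(g(x))\,dx$, as required.

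The main obstacle I anticipate is establishing the $n^2$-speed LDP for $L_n$: this is not a direct application of Sanov's theorem, which yields only speed $n$ for i.i.d.\ samples from a fixed law. The inflated speed here reflects a double layer of randomness -- each $d_i$ already satisfies a Cram\'er LDP at speed $n$ via its Binomial distribution, and averaging $n$ independent such summands multiplies the speed to $n^2$ -- so I would argue the LDP from the explicit product formula for $\mathbb{P}_n(d_1,\ldots,d_n)$ rather than invoke a black-box result. The only real subtlety is that the $(n+1)^n$ combinatorial factor from summing over tuples is exponentially dominated by the speed $n^2$. The remaining ingredients, the contraction principle and the quantile-function rearrangement, are standard.
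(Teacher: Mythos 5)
Your proposal is correct, but it follows a genuinely different route from the paper. The paper linearizes the problem: it strings the $n^2$ i.i.d.\ Bernoulli entries into a single sequence, invokes Mogulskii's theorem to get a sample-path LDP at speed $n^2$ for the partial-sum process in $L_\infty[0,1]$ with rate $\int_0^1 I(G'(x))\,dx$, shows by an exponential-equivalence estimate that the row-sum process is indistinguishable from this at speed $n^2$, and then applies the contraction principle using continuity (via the cut norm) of $G\mapsto\bigl(\int_0^1 G'\,dx,\int_0^1 (G')^p\,dx\bigr)$; the function $g=G'$ plays the role of your $g\in\mathcal{G}_{e,s}$ directly, so no rearrangement step is needed. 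You instead prove a speed-$n^2$ LDP for the empirical out-degree measure $L_n$ by a method-of-types computation (uniform Stirling estimate per row, the $(n+1)^n$ counting factor being $e^{o(n^2)}$, and a grid/quantile approximation for the lower bound), then contract through the weakly continuous map $\mu\mapsto(\int x\,d\mu,\int x^p\,d\mu)$ and convert measures to functions via quantile rearrangement. Both arguments are sound: yours is more elementary and self-contained (no Mogulskii, no exponential equivalence), and it exploits the helpful fact that $I$ is bounded and continuous on $[0,1]$, so $\mu\mapsto\int I\,d\mu$ is weakly continuous and the lower bound closes as soon as your grid approximants converge weakly; the paper's route buys brevity by outsourcing the core estimate to a standard theorem and never needs to pass through the space of measures. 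You correctly diagnose the one real pitfall (this is not Sanov at speed $n$), and your handling of it is adequate.
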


Theorem \ref{psimicro} and Theorem \ref{LDPThm} together imply that
for large number of nodes, a 
typical sample $X$ 
from 
${\mathbb P}_n$ conditioned 
on the event $e(X) \approx e$ and 
$s(X) \approx s$ has the following behavior. 
Approximately $n(x_2-e)/(x_2-x_1)$ of the nodes of $X$
each have on average $nx_{1}$ outward pointing edges, while 
the other approximately $n(e-x_1)(x_2-x_1)$ nodes 
each have on average $nx_{2}$ outward pointing edges. 
When $x_1 \ne x_2$ we call this structure {\em bipodal}; 
otherwise we call it {\em uniform}.

The following is an immediate consequence 
of~Theorem~\ref{LDPThm}.
\begin{theorem}\label{varpsimicro}
For any $e,s \in [0,1]$,
\begin{equation*}
\psi(e,s) = \sup_{{\mathcal G}_{e,s}} \left[-\int_0^1 I(g(x))\,dx\right].
\end{equation*}
\end{theorem}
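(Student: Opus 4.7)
The plan is to read off the theorem directly from the large deviation principle of Theorem~\ref{LDPThm}, using the specific sequence of rectangles $A_\delta = (e-\delta,e+\delta)\times(s-\delta,s+\delta)$ that appears in the definition of $\psi_n^\delta$. Writing out the LDP applied to these sets gives
\begin{equation*}
-\inf_{A_\delta^o} J \;\le\; \liminf_{n\to\infty} \psi_n^\delta(e,s) \;\le\; \limsup_{n\to\infty} \psi_n^\delta(e,s) \;\le\; -\inf_{\bar A_\delta} J,
\end{equation*}
and the job is to show that both outer terms converge to $-J(e,s)$ as $\delta \to 0^+$, where $J(e,s) = \inf_{\mathcal{G}_{e,s}} \int_0^1 I(g(x))\,dx$. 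Rewriting $-J$ as $\sup_{\mathcal{G}_{e,s}}\left[-\int_0^1 I(g(x))\,dx\right]$ will then give the claimed variational formula. If $\mathcal{G}_{e,s}$ is empty, this reads $\psi(e,s) = -\infty$, consistent with the LDP and with the characterization of the effective domain given later in Theorem~\ref{psimicro}.

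The lower bound side is immediate: since $(e,s) \in A_\delta^o$ for every $\delta > 0$, we have $\inf_{A_\delta^o} J \le J(e,s)$, hence $\liminf_n \psi_n^\delta(e,s) \ge -J(e,s)$ for every $\delta$, and the same inequality survives taking $\delta \to 0^+$. The upper bound side requires showing $\lim_{\delta \to 0^+} \inf_{\bar A_\delta} J = J(e,s)$. The quantity $\inf_{\bar A_\delta} J$ is monotone nondecreasing as $\delta \downarrow 0$ and bounded above by $J(e,s)$, so the limit exists and is $\le J(e,s)$. For the matching lower bound on the limit, choose for each $\delta$ a near-minimizer $(\tilde e_\delta,\tilde s_\delta) \in \bar A_\delta$ with $J(\tilde e_\delta,\tilde s_\delta) \le \inf_{\bar A_\delta} J + \delta$; since $(\tilde e_\delta,\tilde s_\delta) \to (e,s)$, the lower semicontinuity of the rate function $J$ (automatic from the LDP) gives $\liminf_{\delta \to 0^+} J(\tilde e_\delta,\tilde s_\delta) \ge J(e,s)$, yielding the reverse inequality.

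There is no real obstacle here; the only thing to be careful about is the (standard) lower semicontinuity step above, and the bookkeeping that matches the supremum in the statement with $-J(e,s)$. In particular, one does not need to argue continuity of $J$ or existence of optimizers to conclude the theorem itself — those refinements are what the subsequent analysis (in the proof of Theorem~\ref{psimicro}) is devoted to.
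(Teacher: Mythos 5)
Your argument is exactly the standard deduction that the paper has in mind when it states Theorem~\ref{varpsimicro} as an immediate consequence of Theorem~\ref{LDPThm} and omits the proof: apply the large deviation bounds to the shrinking rectangles $A_\delta$ from the definition of $\psi_n^\delta$, then use monotonicity in $\delta$ and lower semicontinuity of $J$ to identify both one-sided limits with $-J(e,s)=\sup_{\mathcal{G}_{e,s}}\bigl[-\int_0^1 I(g(x))\,dx\bigr]$. The proposal is correct and matches the intended (standard) route, including the correct handling of the case where $\mathcal{G}_{e,s}$ is empty.
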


The next theorem is an easy consequence 
of Theorem~\ref{LDPThm} and Varadhan's lemma 
(see e.g.~\cite{Dembo}).

\begin{theorem}\label{varpsicanon}
For any $e,s \in [0,1]$ and $\beta_1,\beta_2 \in {\mathbb R}$,
\begin{align*}
&\psi(e,\beta_2) = \sup_{{\mathcal G}_{e,\cdot}} \left[\beta_2\int_0^1 g(x)^p\,dx - \int_0^1 I(g(x))\,dx\right]\\
&\psi(\beta_1,s) = \sup_{{\mathcal G}_{\cdot,s}} 
\left[\beta_1 \int_0^1 g(x)\,dx - \int_0^1 I(g(x))\,dx\right],
\end{align*}
where ${\mathcal G}_{e,\cdot}$ (resp. ${\mathcal G}_{\cdot,s}$) 
is the set of measurable functions $g:[0,1]\to [0,1]$ satisfying
\begin{equation*}
\int_0^1 g(x)\,dx = e \quad \left(\hbox{\em resp.} \int_0^1 g(x)^p\,dx = s\right).
\end{equation*}
\end{theorem}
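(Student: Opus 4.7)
The plan is to apply Varadhan's lemma to the large deviation principle of Theorem~\ref{LDPThm} with the continuous bounded tilt $F(e',s') := \beta_2 s'$, adapted for the indicator restriction $\{e(X) \in (e-\delta,e+\delta)\}$, and then send $\delta \to 0^+$. Concretely, I would set $A_\delta := \{(e',s') \in [0,1]^2 : e' \in (e-\delta,e+\delta)\}$, which is open with closure $\bar A_\delta = \{|e'-e|\le\delta\}$. The standard proof of Varadhan's lemma (partition the range of the bounded $F$ into finitely many small intervals on which $F$ is nearly constant, then invoke the LDP on each piece inside $A_\delta$ or $\bar A_\delta$) yields
\[
\sup_{(e',s') \in A_\delta}\!\bigl[\beta_2 s' - J(e',s')\bigr] \leq \liminf_{n \to \infty} \psi_n^\delta(e,\beta_2) \leq \limsup_{n \to \infty} \psi_n^\delta(e,\beta_2) \leq \sup_{(e',s') \in \bar A_\delta}\!\bigl[\beta_2 s' - J(e',s')\bigr].
\]

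Next I would define $f(e') := \sup_{s' \in [0,1]}[\beta_2 s' - J(e',s')]$. Because $J$ is lower semicontinuous (a built-in property of any LDP rate function) and $s' \mapsto \beta_2 s'$ is continuous, the integrand is upper semicontinuous in $(e',s')$; taking the sup over the compact coordinate $s' \in [0,1]$ preserves upper semicontinuity, so $f$ is upper semicontinuous on $[0,1]$. Hence $\lim_{\delta \to 0^+} \sup_{|e'-e| \leq \delta} f(e') = f(e)$, while trivially $\sup_{|e'-e| < \delta} f(e') \geq f(e)$. Sending $\delta \to 0^+$ in the display above collapses both bounds to $\psi(e,\beta_2) = f(e)$.

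To finish, I would substitute the formula $J(e',s') = \inf_{g \in \mathcal{G}_{e',s'}} \int_0^1 I(g)$ from Theorem~\ref{LDPThm} and use $\mathcal{G}_{e,\cdot} = \bigcup_{s' \in [0,1]} \mathcal{G}_{e,s'}$ together with the identity $s' = \int_0^1 g^p$ for $g \in \mathcal{G}_{e,s'}$; the double supremum over $(s',g)$ then collapses to a single supremum over $g \in \mathcal{G}_{e,\cdot}$, producing the stated variational formula. The statement for $\psi(\beta_1,s)$ follows by the symmetric argument with the tilt $F(e',s') = \beta_1 e'$, the roles of the two coordinates interchanged, and $\mathcal{G}_{\cdot,s}$ in place of $\mathcal{G}_{e,\cdot}$. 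I do not expect any real difficulty: the only substantive point is the upper semicontinuity of $f$ needed to collapse the constraint window in Step~2, and that is immediate from the lower semicontinuity of $J$.
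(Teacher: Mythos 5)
Your argument is correct and follows exactly the route the paper indicates: the paper omits the proof, stating it is a standard consequence of Theorem~\ref{LDPThm} and Varadhan's lemma, and your write-up (Laplace-type upper/lower bounds on the window $A_\delta$, collapse of the window as $\delta\to 0^+$ via upper semicontinuity of $e'\mapsto\sup_{s'}[\beta_2 s'-J(e',s')]$, then substitution of the contraction formula for $J$) is precisely the standard argument being invoked. No gaps worth flagging.
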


From Theorem~\ref{LDPThm}, the proofs of Theorem~\ref{varpsimicro} 
and Theorem~\ref{varpsicanon} are standard, so we omit them.

Chatterjee and Varadhan~\cite{ChatterjeeVaradhan} 
established large deviations for undirected random 
graphs on the space of graphons (see also Lovasz~\cite{Lovasz}). 
Szemer\'{e}di's lemma was needed
in order to establish the compactness needed for large 
deviations. Since our model consists of directed graphs, the 
results in~\cite{ChatterjeeVaradhan} do not apply directly. 
Theorem~\ref{LDPThm} avoids these technical difficulties:
it is large deviations principle only 
on the space $[0,1]^2$ of edge and star densities, instead 
of on the (quotient) function space of graphons. 
Our proof relies on the simplicity of our edge/directed $p$-star model; 
it cannot be easily extended to the case where edges and 
directed triangles (or other more complicated 
directed subgraphs) are constrained. We expect that these 
models can be handled by adapting the 
results of~\cite{ChatterjeeVaradhan} to the directed case.

In Aristoff and Zhu \cite{AristoffZhu}, it was proved that
\begin{equation}\label{RH}
\lim_{n\to \infty} \frac{1}{n^2}
\log {\mathbb E}\left[e^{n^{2}(\beta_1 e(X)+\beta_2 s(X))}\right]
= \sup_{0\leq x\leq 1}\left(\beta_{1}x+\beta_{2}x^{p}-I(x)\right).
\end{equation}
Observe that the G\"{a}rtner-Ellis theorem cannot 
be used to obtain the large deviations principle
in Theorem \ref{LDPThm} above, due to the fact, 
first observed in Radin and Yin~\cite{Radin}, that 
the right hand side of~\eqref{RH} is not differentiable. 
Instead, we used Mogulskii's theorem and the 
contraction principle
in the proof of Theorem \ref{LDPThm}.

On the other hand, once we have established
the large deviations principle in Theorem \ref{LDPThm}, 
we can use Varadhan's lemma to obtain
an alternative expression for the limiting free energy,
\begin{equation*}
\lim_{n\to \infty} \frac{1}{n^2}
\log {\mathbb E}\left[e^{n^{2}(\beta_1 e(X)+\beta_2 s(X))}\right]
=\sup_{(e,s)\in[0,1]^{2}}\left(\beta_{1}e+\beta_{2}s+\psi(e,s)\right).
\end{equation*}
The limiting free energy in the directed and undirected models
differ by only a constant factor of $1/2$  (see 
Chatterjee and Diaconis~\cite{ChatterjeeDiaconis} and 
Radin and Yin~\cite{Radin}
for the undirected model, and Aristoff and Zhu~\cite{AristoffZhu} 
for the directed model). On the other hand, the limiting 
entropy density $\psi(e,s)$ we 
obtain here differs nontrivially from the one 
recently obtained in Kenyon et al. \cite{Kenyon} 
for undirected graphs.

\section{Proofs}\label{PROOFS}

We start with the proof of the large deviations 
principle of Section~\ref{LDP}. Then we turn to 
the proofs of our main results in Section~\ref{RESULTS}.

\begin{proof}[Proof of Theorem~\ref{LDPThm}]
By definition, $\mathbb{P}_{n}$ is the uniform probability measure on the set of directed graphs on $n$ nodes. Recall that
\begin{align*}
&e(X)=n^{-2}\sum_{1\le i,j\le n} X_{ij},\\
&s(X)=n^{-p-1} \sum_{1\leq i\leq n}\left(\sum_{1\leq j\leq n}X_{ij}\right)^{p}.
\end{align*}
Under $\mathbb{P}_{n}$, $(X_{ij})_{1\leq i,j\leq n}$ are i.i.d. Bernoulli random variables
that take the value $1$ with probability $\frac{1}{2}$ and $0$ with probability $\frac{1}{2}$.
Therefore, the logarithm of the moment generating function of $X_{ij}$ is, for  
$\theta\in\mathbb{R}$,
\begin{equation*}
\log\mathbb{E}[e^{\theta X_{ij}}]=\log\left(\frac{1}{2}e^{\theta}+\frac{1}{2}\right)
=\log(e^{\theta}+1)-\log 2.
\end{equation*}
Its Legendre transform is
\begin{equation*}
\sup_{\theta\in\mathbb{R}}(\theta x-\log\mathbb{E}[e^{\theta X_{ij}}])
=
x\log x+(1-x)\log(1-x)+\log 2=I(x),
\end{equation*}
where by convention $I(x) = \infty$ for $x \notin [0,1]$.
Let $Y_{i}$ be the $i$th entry of the vector
\begin{equation*}
\left(X_{11},X_{12},\ldots,X_{1n},X_{21},X_{22},\ldots,X_{2n},\ldots,X_{n1},X_{n2},\ldots,X_{nn}\right).
\end{equation*}
Then, $Y_{i}$ are i.i.d. Bernoulli random variables.
Mogulskii theorem (see e.g. Dembo and Zeitouni~\cite{Dembo}) shows that 
\begin{equation*} 
\mathbb{P}\left(\frac{1}{n^{2}}\sum_{i=1}^{\lfloor n^{2}x\rfloor}Y_{i}
\in\cdot,0\leq x\leq 1\right)
\end{equation*}
satisfies a sample path large deviation principle
on the space $L_{\infty}[0,1]$ consisting of 
functions on $[0,1]$ equipped with the supremum norm; 
the rate function is given by
\begin{equation*}
\mathcal{I}(G)=
\begin{cases}
\int_{0}^{1}I(G'(x))dx &\text{if $G\in\mathcal{AC}_{0}[0,1]$}
\\
+\infty &\text{otherwise}
\end{cases},
\end{equation*}
where $\mathcal{AC}_{0}[0,1]$ is the set of 
absolutely continuous functions defined on $[0,1]$
such that $G(0)=0$ and $0\leq G'(x)\leq 1$.
The restriction $0\leq G'(x)\leq 1$ comes from the 
fact that $0\leq Y_{i}\leq 1$.
On the other hand, for any $\epsilon>0$,
\begin{align*}
&\limsup_{n\to\infty}\frac{1}{n^{2}}\log\mathbb{P}
\left(\sup_{0\leq x\leq 1}
\left|\frac{1}{n^{2}}\sum_{i=1}^{\lfloor nx\rfloor}
\sum_{j=1}^{n}X_{ij}-\frac{1}{n^{2}}\sum_{i=1}^{\lfloor n^{2}x\rfloor}Y_{i}\right|\geq\epsilon\right)
\\
&\leq\limsup_{n\to\infty}\frac{1}{n^{2}}\log\mathbb{P}
\left(\sup_{0\leq i\leq n^{2}-n}(Y_{i+1}+Y_{i+2}+\cdots Y_{i+n})\geq n^{2}\epsilon\right)
\\
&=-\infty,
\end{align*}
since 
\begin{equation*}
\sup_{0\leq i\leq n^{2}-n}(Y_{i+1}+Y_{i+2}+\cdots Y_{i+n})\leq n.
\end{equation*}
Therefore,
\begin{equation*}
\mathbb{P}\left(\frac{1}{n^{2}}\sum_{i=1}^{\lfloor nx\rfloor}\sum_{j=1}^{n}X_{ij}
\in\cdot,0\leq x\leq 1\right)
\end{equation*} satisfies a large deviation principle with the same
space and rate function as 
\begin{equation*}
\mathbb{P}\left(\frac{1}{n^{2}}\sum_{i=1}^{\lfloor n^{2}x\rfloor}Y_{i}
\in\cdot,0\leq x\leq 1\right).
\end{equation*}

To complete the proof, we need to use the contraction principle, 
see e.g. Dembo and Zeitouni \cite{Dembo} or Varadhan \cite{VaradhanII}.
Given $G\in {\mathcal AC}_0[0,1]$, we may write
$G(x)=\int_{0}^{x}g(y)dy$ for a measurable function $g:[0,1]\to[0,1]$. 
It is easy to see that if $G_n\in {\mathcal AC}_0[0,1]$ 
and $G_{n}\to G$ in the supremum norm, then
$g_{n}\to g$ in the cut norm. 
Hence, if $G_{n}\to G$ in the supremum norm, then 
\begin{equation*}
\int_{0}^{1}g_{n}(x)dx\to\int_{0}^{1}g(x)dx.
\end{equation*}
Moreover, for any $p\geq 2$,
\begin{equation*}
\int_{0}^{1}(g_{n}(x)^{p}-g(x)^{p})dx
=\left[\int_{g_{n}\geq g}(g_{n}(x)^{p}-g(x)^{p})dx\right]
-\left[\int_{g_{n}<g}(g(x)^{p}-g_{n}(x)^{p})dx\right],
\end{equation*}
and since $g_{n}\to g$ in the cut norm and $0\leq g_{n},g\leq 1$, 
the mean value theorem shows that 
\begin{equation*}
\int_{g_{n}\geq g}(g_{n}^{p}(x)-g(x)^{p})dx
\leq p\int_{g_{n}\geq g}(g_{n}(x)-g(x))dx\to 0
\end{equation*}
and
\begin{equation*}
\int_{g_{n}<g}(g^{p}(x)-g_{n}(x)^{p})dx
\leq p\int_{g_{n}<g}(g(x)-g_{n}(x))dx\to 0.
\end{equation*}
Therefore, the maps
\begin{equation*}
G\mapsto\int_{0}^{1}G'(x)dx,\qquad
G\mapsto\int_{0}^{1}(G'(x))^{p}dx,
\end{equation*}
are continuous from $L_{\infty}[0,1]\cap\mathcal{AC}_{0}[0,1]$ to $[0,1]$ 
and thus the map
\begin{equation*}
G\mapsto\left(\int_{0}^{1}G'(x)dx,\int_{0}^{1}(G'(x))^{p}dx\right)
\end{equation*}
is continuous from $L_{\infty}[0,1]\cap\mathcal{AC}_{0}[0,1]$ to $[0,1]^{2}$. 
By the contraction principle, we conclude that ${\mathbb P}_n\left(e(X) \in \cdot, \,
s(X) \in \cdot \right)$ satisfies a large deviation principle on the space $[0,1]^2$ with speed $n^{2}$
and rate function $J(e,s)=\inf_{g\in\mathcal{G}_{e,s}}\int_{0}^{1} I(g(x))\,dx.$
\end{proof}

The following proofs are for our main results in 
Section~\ref{RESULTS}.

\begin{proof}[Proof of Theorem~\ref{psimicro}]

By Theorem~\ref{varpsimicro}, 
\begin{equation}\label{functional}
\psi(e,s) = \sup_{g \in {\mathcal G}_{e,s}} 
\left[-\int_0^1 I(g(x))\,dx\right]
\end{equation}
where we recall ${\mathcal G}_{e,s}$ 
is the set of measurable functions $g:[0,1]\to [0,1]$ 
satisfying
\begin{equation}\label{C1}
\int_0^1 g(x)\,dx = e, \quad \int_0^1 g(x)^p \,dx = s.
\end{equation}

{\em Domain of $\psi(e,s)$.} By Jensen's inequality, 
\begin{equation*}
s = \int_0^1 g(x)^p \,dx 
\ge \left(\int_0^1 g(x)\,dx\right)^p = e^p.
\end{equation*}
On the other hand, since $g$ takes values in 
$[0,1]$ and $p > 1$, 
\begin{equation*}
s = \int_0^1 g(x)^p \,dx \le \int_0^1 g(x)\,dx = e.
\end{equation*}
Hence by convention $\psi(e,s)$ is infinite outside 
${\bar D}$. We will show later in the proof that $\psi(e,s)$ 
is finite in ${\bar D}$. 

{\em Optimizing over a quotient space.}
So that we can use standard variational techniques, we 
show that it suffices to optimize over a compact 
quotient space of ${\mathcal G}_{e,s}$.  
Recall that ${\mathcal G}$ is the set of measurable 
functions $[0,1] \to [0,1]$ endowed with the cut 
norm~\eqref{cutnorm}. Let ${\tilde {\mathcal G}}$ be 
the quotient space obtained by the following equivalence 
relation: $g \sim h$ if and only if 
there is a measure-preserving bijection $\sigma:[0,1] \to [0,1]$ 
such that $g = h\circ \sigma$. Note that 
for any such $\sigma$ we have $||g||_\square = ||g \circ \sigma||_\square$, and so 
\begin{equation*}
 \delta_\square({\tilde g},{\tilde h}) := \inf_\sigma ||g - h\circ \sigma||_\square
\end{equation*}
defines a metric on ${\tilde {\mathcal G}}$. With 
this metric ${\tilde {\mathcal G}}$ is compact~\cite{Janson}. 
Observe that 
\begin{equation*}
 -\int_0^1 I(g(x))\,dx,\quad \int_0^1 g(x)\,dx, \quad \int_0^1 g(x)^p\,dx
\end{equation*}
are all unchanged when $g$ is replaced by $g \circ \sigma$. Thus, 
\begin{equation*}
\psi(e,s) = \sup_{g \in {\mathcal G}_{e,s}} 
\left[-\int_0^1 I(g(x))\,dx\right] = \sup_{{\tilde g} \in {\tilde {\mathcal G}}_{e,s}} 
\left[-\int_0^1 I(g(x))\,dx\right],
\end{equation*}
where ${\tilde {\mathcal G}}_{e,s}$ is the projection of 
${\mathcal G}_{e,s}$ in the quotient space, and on the right 
hand side $g$ is any element of the equivalence class of ${\tilde g}$. 
Next we check 
that the functionals  
\begin{equation}\label{cts}
 {\tilde g}\mapsto -\int_0^1 I(g(x))\,dx,\quad   {\tilde g}\mapsto \int_0^1 g(x)\,dx,
 \quad {\tilde g}\mapsto \int_0^1 g(x)^p\,dx,
\end{equation}
defined on $\tilde {\mathcal G}$, 
are all continuous. Let ${\tilde g}_n$ be a sequence in ${\tilde {\mathcal G}}$
converging to ${\tilde g}$, and let 
$g_n, g$ be representatives for 
${\tilde g}_n, {\tilde g}$, respectively. We will show that
\begin{equation}\label{toshow}
\left|\int_0^1 F(g(x))\,dx - \int_0^1 F(g_n(x))\,dx\right| \to 0 \hbox{ as } n \to \infty
\end{equation}
for any uniformly continuous function $F$ defined on $[0,1]$. 
For each $n$, choose $\sigma_n$ such that 
\begin{equation*}
 ||g - g_n\circ \sigma_n||_\square \to 0 \hbox{ as } n\to \infty.
\end{equation*}
For any $\delta > 0$, define
\begin{equation*}
 A_{n,\delta} = \{x\,:\,|g(x) - g_n(\sigma_n(x))| \ge \delta\}.
\end{equation*}
As the cut norm is equivalent to the $L^1$ norm (this is true only 
in one dimension; see~\cite{Janson}), 
we have for any $\delta > 0$, 
\begin{equation}\label{An}
 |A_{n,\delta}| \to 0 \hbox{ as } n \to \infty,
\end{equation}
where $|A_{n,\delta}|$ is the Lebesgue measure of $A_{n,\delta}$.
Now, observe that 
\begin{align}\begin{split}\label{EST}
&\left|\int_0^1 F(g(x))\,dx - \int_0^1 F(g_n(x))\,dx\right| \\
&= \left|\int_0^1 F(g(x))\,dx - \int_0^1 F(g_n\circ \sigma_n(x))\,dx\right| \\
&\le 2|A_{n,\delta}|\sup_{x \in [0,1]}|F(x)| + \left|\int_{[0,1]\setminus A_{n,\delta}} [F(g(x)) -F(g_n\circ \sigma_n(x))]\,dx \right|.
\end{split}
\end{align}
Let $\epsilon > 0$. Using uniform continuity of $F$, let $\delta>0$ be 
such that $|x-y|<\delta$ implies $|F(x)-F(y)| < \epsilon$. Then~\eqref{EST} 
shows that 
\begin{equation}\label{lastf}
 \left|\int_0^1 F(g(x))\,dx - \int_0^1 F(g_n(x))\,dx\right| 
\le 2|A_{n,\delta}|\sup_{x \in [0,1]}|F(x)| + \epsilon.
\end{equation}
Now~\eqref{An} establishes~\eqref{toshow}, as desired. 
These arguments show that there exist (global) maximizers $g$ 
of 
\begin{equation}\label{functionalcomp}
\psi(e,s) = \sup_{{g} \in{{\mathcal G}}_{e,s}}\left[-\int_0^1 I(g(x))\,dx \right].
\end{equation}

{\em Bipodal structure of the optimizers.}
If $(e,s) \in D^{up}$, then for any ${g} \in {{\mathcal G}}_{e,s}$,
\begin{equation*}
\left|\left\{x\,:\,{g}(x)\in \{0,1\}\right\}\right| = 1.
\end{equation*} 
Suppose then that $(e,s) \in {\bar D}\setminus D^{up}$. 
Since $I'(x) \to -\infty$ as $x \to 0$ and 
$I'(x) \to \infty$ as $x \to 1$, it is not hard to see that, 
given $(s,e) \in {\bar D}\setminus D^{up}$, 
there exists $\epsilon > 0$ such that any optimizers of~\eqref{functionalcomp}
must be in the following set:
\begin{equation}\label{gepsilon}
 {\mathcal G}^{\epsilon} := \{g\in {\mathcal G}\,:\,g(x) \in [\epsilon,1-\epsilon] \hbox{ for a.e. } 
x \in [0,1]\}.
\end{equation}
To see this, note that if $g \in {\mathcal G}_{e,s}\setminus {\mathcal G}^{\epsilon}$, 
then the values of $g$ can be adjusted to be $\epsilon$ distance away from $0$ and 
$1$ so that still $g \in {\mathcal G}_{e,s}$ and $g$ attains a 
larger value for the integral in~\eqref{functionalcomp}.
Then, optimizing over ${\mathcal G}^{\epsilon}$, standard results in variational calculus 
(see~\cite{Clarke}, Theorem~9.1, pg.~178) show that optimizers $g$ 
satisfy, for all $\delta g \in {\mathcal G}$, 
\begin{equation}\label{VC}
-\eta\int_0^1 I'(g(x))\delta g(x)\,dx + 
\beta_1\int_0^1 \delta g(x)\,dx + 
\beta_2 \int_0^1 pg(x)^{p-1}\delta g(x)\,dx = 0,
\end{equation}
where $\eta \in \{0,1\}$, $\beta_1,\beta_2 \in {\mathbb R}$ 
are Lagrange multipliers, and at least one 
of $\eta$, $\beta_1$, $\beta_2$ is nonzero. The 
integrals in~\eqref{VC} are the Fr\'echet derivatives of 
\begin{equation*}
 {g}\mapsto \int_0^1 I(g(x))\,dx,\quad   {g}\mapsto \int_0^1 g(x)\,dx,
 \quad {g}\mapsto \int_0^1 g(x)^p\,dx,
\end{equation*}
evaluated at $\delta g \in {\mathcal G}$; a simplified version of 
the arguments in~\eqref{toshow}-~\eqref{lastf} shows the Fr\'echet 
derivatives are continuous in ${g} \in {\mathcal G}^{\epsilon}$. 
(To apply the results mentioned in~\cite{Clarke}, 
we must switch to the uniform topology induced by the sup norm 
$||\cdot||_\infty$ on the 
Banach space of bounded measurable functions $[0,1] \to {\mathbb R}$. 
The statement about 
continuity of Frechet derivatives then follows from the 
simple fact that $||g-h||_\infty < \epsilon$ 
implies $||g-h||_1 < \epsilon$.)

When $\eta = 1$ (called the 
{\em normal} case), we see that for some $\beta_1$, $\beta_2$, 
\begin{equation*}
\ell'(g(x)) = 0,\quad \hbox{ for a.e. } x \in [0,1].
\end{equation*}
When $\eta = 0$ (called the {\em abnormal} case), we 
find that $\beta_1 + p\beta_2 g(x)^{p-1} = 0$ for a.e.
$x \in [0,1]$, so that $g$ is constant a.e. 
From~\eqref{C1} and Jensen's inequality, this 
occurs precisely when $s = e^p$. 

Let ${g}$ be a maximizer of~\eqref{functional}. 
We have shown that: 
\begin{align*}
&(a) \hbox{ if } (e,s) \in D^{up},  
\hbox{ then } g(x) \in \{0,1\}  \hbox{ for a.e. }x \in [0,1];\\ 
&(b) \hbox{ if } (e,s) \in D, \hbox{ then } 
\ell'(g(x)) = 0 
\hbox{ for a.e. }x \in [0,1]; \hbox { or}\\
&(c) \hbox{ if }
s \in {\partial D}\setminus D^{up}, 
\hbox{ then }g \hbox{ is constant a.e.}
\end{align*}
Theorem~\ref{trans_curve} shows 
that, for each $(\beta_1,\beta_2)$, 
either $\ell'(y) = 0$ 
at a unique $y \in (0,1)$ or $\ell'(y) = 0$ at 
exactly two points $0< y_1 < y_2 < 1$. 
Thus, to maximize~\eqref{functional} 
it suffices to maximize
\begin{equation*}
-\int_0^1 I(g(x))\,dx
\end{equation*}
over the set of functions $g \in {\mathcal G}_{e,s}$ of the form 
\begin{equation}\label{form}
g(x) = \begin{cases} y_1, & x \in A\\
y_2, & x \notin A\end{cases}
\end{equation}
where $|A| = \lambda \in [0,1]$ and $0 \le y_1 \le y_2 \le 1$. 
Observe that for such $g$, 
\begin{align*}
-\int_0^1 I(g(x))\,dx &= -\lambda I(y_1) - (1-\lambda)I(y_2),\\
\int_0^1 g(x)\,dx &= \lambda y_1 + (1-\lambda)y_2,\\
\int_0^1 g(x)^p\,dx &= \lambda y_1^p + (1-\lambda)y_2^p.
\end{align*}
It is therefore enough to maximize
\begin{equation}\label{MAX}
-\lambda I(y_1) - (1-\lambda)I(y_2)
\end{equation}
subject to the constraints 
\begin{align}\begin{split}\label{constraints2}
&\lambda y_1 + (1-\lambda)y_2 = e,\\
&\lambda y_1^p + (1-\lambda)y_2^p = s.\end{split}
\end{align}
In case {\em (a)}, we must have $y_1 = 0$, 
$y_2 = 1$ and $1-\lambda = e = s$. 
In case {\em (c)}, we must take $y_1 = y_2 = e$. 
This establishes formula~\eqref{psiformula} for $(e,s) \in \partial D$. 
It remains to consider {\em (b)}. In this 
case $(e,s) \in D$ 
we have seen that $0 < y_1 \le y_2 < 1$. 
Moreover if $y_1 = y_2$ or 
$\lambda \in \{0,1\}$ then~\eqref{constraints2} 
cannot be satisfied, so $y_1 < y_2$ and 
$\lambda \in (0,1)$. 
Introducing Lagrange multipliers $\beta_1$, 
$\beta_2$, we see that 
\begin{align*}
&-\lambda I'(y_1) + \lambda\beta_1  + \lambda p\beta_2 y_1^{p-1} = 0,\\
&-(1-\lambda) I'(y_2) + (1-\lambda)\beta_1  + (1-\lambda) p\beta_2 y_2^{p-1} = 0,\\
&-[I(y_1) - I(y_2)] + \beta_1(y_1-y_2) + \beta_2(y_1^p-y_2^p) = 0.
\end{align*}
Thus,
\begin{equation*}
\ell'(y_1) = \ell'(y_2) = 0, \quad \ell(y_1) = \ell(y_2).
\end{equation*}
This means that $y_1 = x_1 < x_2 = y_2$ 
are global maximizers of $\ell$ on the phase 
transition curve, away from the critical point.

{\em Uniqueness of the optimizer (geometric proof).} 
We must prove uniqueness for $(e,s) \in {D}$. 
Consider the class of lines 
\begin{equation}\label{lines}
\lambda(x_1,x_1^p) + (1-\lambda)(x_2,x_2^p), \quad 0<\lambda < 1,
\end{equation}
where $0<x_1 < x_2 < 1$ are 
global maximizers of $\ell$ 
on the phase transition curve, away from the critical point. 
Since $x_1$ (resp. $x_2$) is strictly increasing (resp. strictly 
decreasing) in $\beta_1$ with $x_1 \le x_2$, 
no two distinct lines of this class 
can intersect. Continuity of $x_1$, $x_2$ in $\beta_1$ 
along with~\eqref{x1_lims} show that the union of all the lines  
equals ${D}$. Thus, given $(e,s) \in {D}$, there is a 
unique optimizer $g$ of the form~\eqref{form}, obtained by 
locating the unique line from~\eqref{lines} which contains 
the point $(e,s)$ and choosing $\lambda$ satisfying 
the constraint~\eqref{constraints2}. Solving 
for $\lambda$ in the first constraint, we 
get $\lambda = (x_2-e)/(x_2-x_1)$. 
Putting this into the second constraint gives
\begin{equation*}
\frac{x_2-e}{x_2-x_1}x_1^p + \frac{e-x_1}{x_2-x_1}x_2^p = s.
\end{equation*}
This establishes formula~\eqref{psiformula} for $(e,s) \in D$.

{\em Uniqueness of the optimizer (algebraic proof).} 
Fix $(e,s) \in D$.  
From~\eqref{constraints2}, 
\begin{equation*}
\lambda=\frac{e-x_{2}}{x_{2}-x_{1}}
=\frac{s-x_{2}^{p}}{x_{2}^{p}-x_{1}^{p}}
\end{equation*}
and so
\begin{equation}\label{sol}
e-x_{2} = -q'(\beta_{1})(s-x_{2}^{p}).
\end{equation}
We must show that~\eqref{sol} has a unique solution. 
Define
\begin{equation*}
F(\beta_{1}):=e-x_{2}+q'(\beta_{1})(s-x_{2}^{p}).
\end{equation*}
Note that  
\begin{equation*}
\lim_{\beta_{1}\to-\infty}F(\beta_{1})
=e-1-(s-1)=e-s\geq 0,
\end{equation*}
and
\begin{align*}
\lim_{\beta_{1}\to\beta_{1}^{c}}F(\beta_{1})
&=e-\frac{p-1}{p}-\frac{p^{p-2}}{(p-1)^{p-1}}\left(s-\left(\frac{p-1}{p}\right)^{p}\right)
\\
&=e-\left(\frac{p-1}{p}\right)^{2}-s\frac{p^{p-2}}{(p-1)^{p-1}}
\\
&\leq e-\left(\frac{p-1}{p}\right)^{2}-e^{p}\frac{p^{p-2}}{(p-1)^{p-1}}.
\end{align*}
Also define 
\begin{equation*}
G(x):=x-\left(\frac{p-1}{p}\right)^{2}-x^{p}\frac{p^{p-2}}{(p-1)^{p-1}},\quad 0\leq x\leq 1.
\end{equation*}
Then $G(0)<0$ and $G(1)<0$. Moreover,
\begin{equation*}
G'(x)=1-x^{p-1}\left(\frac{p}{p-1}\right)^{p-1}
\end{equation*}
is positive if $0<x<\frac{p-1}{p}$ and it is negative
if $\frac{p-1}{p}<x<1$. Finally, 
\begin{equation*}
G\left(\frac{p-1}{p}\right)=\frac{p-1}{p}-\frac{(p-1)^{2}}{p^{2}}-\frac{p-1}{p^{2}}=0.
\end{equation*}
Therefore, 
\begin{equation*}
\lim_{\beta_{1}\to\beta_{1}^{c}}F(\beta_{1})
\leq e-\left(\frac{p-1}{p}\right)^{2}-e^{p}\frac{p^{p-2}}{(p-1)^{p-1}}\leq 0.
\end{equation*}
Now by the Intermediate Value Theorem, there exists 
$\beta_{1}^{\ast}\leq\beta_{1}^{c}$ such that 
$F(\beta_{1}^{\ast})=0$.

Next we prove $\beta_{1}^{\ast}$ is unique. Observe that
\begin{equation*}
F'(\beta_1)
=-\left[1+q'(\beta_{1})px_{2}^{p-1}\right]\frac{\partial x_{2}}{\partial\beta_{1}}
+q''(\beta_{1})(s-x_{2}^{p}).
\end{equation*}
Since $0\leq\lambda\leq 1$, we have $s-x_{2}^{p}\leq 0$. We also know that $q''(\beta_{1})\geq 0$
and $\partial x_{2}/\partial\beta_{1}>0$.
Moreover, since 
\begin{equation*}
q'(\beta_{1})=-\frac{x_{1}-x_{2}}{x_{1}^{p}-x_{2}^{p}}<0
\end{equation*}
and $0<x_{1}<\frac{p-1}{p}<x_{2}<1$, we have
\begin{equation*}
1+q'(\beta_{1})px_{2}^{p-1}=1-\frac{x_{1}-x_{2}}{x_{1}^{p}-x_{2}^{p}}px_{2}^{p-1}<0.
\end{equation*} 
Hence, we conclude that $\partial F/\partial\beta_{1}<0$ for $\beta_{1}<\beta_{1}^{c}$
and therefore $\beta_{1}^{\ast}$ is unique.

{\em Regularity of $\psi(e,s)$.}
We turn now to the claimed regularity of $\psi(e,s)$, 
starting with analyticity. 

Fix $e \in (0,1)$. Then each $s \in (e^p,e)$ satisfies, for 
some $x_1 < x_2$, 
\begin{equation}\label{star2}
 \frac{x_2-e}{x_2-x_1}x_1^p + 
 \frac{e-x_1}{x_2-x_1}x_2^p = s.
\end{equation}  
We claim that~\eqref{star2} defines $\beta_1$ 
implicitly as an analytic function of $s$ 
for $s \in (e^p,e)$. By differentiating 
the left hand side of~\eqref{star2} with respect to $\beta_1$, 
we find the expression
\begin{align*}
&\left(\frac{x_1-e}{x_2-x_1}\frac{\partial x_2}{\partial \beta_1}
+ \frac{e-x_2}{x_2-x_1}\frac{\partial x_1}{\partial \beta_1}\right)
\frac{x_2^p-x_1^p}{x_2-x_1}\\
&\quad \quad - \frac{x_1-e}{x_2-x_1}\frac{\partial x_2}{\partial \beta_1}px_2^{p-1} 
- \frac{e-x_2}{x_2-x_1}\frac{\partial x_1}{\partial \beta_1}px_1^{p-1}.
\end{align*}
By the mean value theorem, there is $x_1 < y < x_2$ such that 
this expression becomes
\begin{equation*}
\left[\frac{x_1-e}{x_2-x_1}\frac{\partial x_2}{\partial \beta_1}
(py^{p-1}-px_2^{p-1})\right] + 
\left[\frac{e-x_2}{x_2-x_1}\frac{\partial x_1}{\partial \beta_1}
(py^{p-1}-px_1^{p-1})\right].
\end{equation*}
Since $\partial x_1/\partial \beta_1 > 0$ 
and $\partial x_2/\partial \beta_1 < 0$,
each of the terms in brackets is negative, so this 
expression is nonzero. By the analytic implicit function 
theorem~\cite{Krantz}, we conclude that $\beta_1$ 
is an analytic function of 
$s$ inside $D$. Similar arguments show that $\beta_1$ 
is an analytic function of $e$ inside $D$. (By 
Theorem~1, this means $\beta_2$ must also be an analytic 
function of $e$ and $s$ inside $D$.)
Since $x_1$ and $x_2$ are analytic functions 
of $\beta_1$, we see that $x_1$ and $x_2$ are analytic 
functions of $e$ and $s$ inside $D$. Inspecting~\eqref{psiformula}, 
we conclude that $\psi = \psi(e,s)$ is analytic in ${D}$.

Next we show that $\psi$ is 
continuous on ${\bar D}$. We have already shown that 
$\psi$ is analytic, hence continuous, in ${D}$. It 
is easy to check that $\psi$ is 
continuous along each line 
\begin{equation*}
\lambda(x_1,x_1^p) + (1-\lambda)(x_2,x_2^p), \quad 0\le\lambda \le 1,
\end{equation*} 
and that the restriction 
of $\psi$ to the lower boundary 
$\{(e,s)\,:\, s = e,\,0< e< 1\}$ is continuous. 
This is enough to conclude that $\psi$ 
is continuous on ${\bar D}\setminus D^{up}$.
Finally for $(e_0,e_0) \in D^{up}$, we have 
\begin{equation*}
 \lim_{(e,s) \to (e_0,e_0)} \psi(e,s) = 
\lim_{x_1 \to 0, \, x_2 \to 1} -\frac{x_2-e}{x_2-x_1}I(x_1) 
- \frac{e-x_1}{x_2-x_1}I(x_2) = -\log 2 = \psi(e_0,e_0).
\end{equation*}

Next we prove continuity of the first order 
derivatives on ${\bar D}\setminus D^{up}$. It suffices to show that: 
(i) the first order partial derivatives of 
$\psi$ are continuous 
in $D$; and (ii) the limits of the first order 
partial derivatives of $\psi$ at the lower 
boundary $\{(e,s)\,:\,s = e^p, \, 0 < e < 1\}$ exist. 
With 
\begin{equation*}
 \lambda = \frac{x_2-e}{x_2-x_1}
\end{equation*}
and using~\eqref{psiformula}, we see that for $(e,s) \in D$, 
\begin{equation}\label{1stderiv}
 \frac{\partial \psi}{\partial s} = \frac{\partial \lambda}{\partial s}\left(I(x_1)-I(x_2)\right) 
+ \lambda I'(x_1) \frac{\partial x_1}{\partial s}
+ (1-\lambda)I'(x_2)\frac{\partial x_2}{\partial s}.
\end{equation}
Using the fact that $I(x) = \beta_1 x + \beta_2 x^p - \ell(x)+\log 2$,
\begin{align*}
 \frac{\partial \psi}{\partial s} &= 
\frac{\partial \lambda}{\partial s}\left(\beta_1 x_1 + \beta_2 x_1^p 
- \beta_1 x_2 - \beta_2 x_2^p\right) \\
&\quad \quad+ \lambda \frac{\partial x_1}{\partial s}\left(\beta_1  + p\beta_2 x_1^{p-1}\right) 
+ (1-\lambda)\frac{\partial x_2}{\partial s}\left(\beta_1 + p\beta_2 x_2^{p-1}\right).
\end{align*}
It is straightforward to compute that 
\begin{equation}\label{plambda}
 \frac{\partial \lambda}{\partial s} = \lambda\frac{\partial x_1}{\partial s}\frac{1}{x_2-x_1}
+ (1-\lambda)\frac{\partial x_2}{\partial s}\frac{1}{x_2-x_1}.
\end{equation}
Thus, 
\begin{align*}
  \frac{\partial \psi}{\partial s} &= 
- \left(\lambda\frac{\partial x_1}{\partial s}+(1-\lambda)\frac{\partial x_2}{\partial s}\right)
\left(\beta_1 + \beta_2 \frac{x_2^p-x_1^p}{x_2-x_1}\right)\\
&\quad \quad+ \lambda \frac{\partial x_1}{\partial s}\left(\beta_1  + p\beta_2 x_1^{p-1}\right) 
+ (1-\lambda)\frac{\partial x_2}{\partial s}\left(\beta_1  + p\beta_2 x_2^{p-1}\right)\\
&= \beta_2\left[\lambda\frac{\partial x_1}{\partial s}\left(p x_1^{p-1}- \frac{x_2^{p}-x_1^p}{x_2-x_1}\right)
+(1-\lambda)\frac{\partial x_2}{\partial s}\left(p x_2^{p-1}- \frac{x_2^{p}-x_1^p}{x_2-x_1}\right)\right].
\end{align*}
Differentiating 
\begin{equation*}
 \lambda x_1^p + (1-\lambda)x_2^p = s
\end{equation*}
with respect to $s$ and using~\eqref{plambda}, we find that 
\begin{equation*}
 1 = \lambda\frac{\partial x_1}{\partial s}\left(p x_1^{p-1}- \frac{x_2^{p}-x_1^p}{x_2-x_1}\right)
+(1-\lambda)\frac{\partial x_2}{\partial s}\left(p x_2^{p-1}- \frac{x_2^{p}-x_1^p}{x_2-x_1}\right),
\end{equation*}
and so
\begin{equation*}
  \frac{\partial \psi}{\partial s} = 
\beta_2.
\end{equation*}
We have already seen that $\beta_2$ is an analytic, hence continuous, 
function of $(e,s)$ inside $D$. Since 
\begin{equation*}
q^{-1}(\beta_2) + p\beta_2 x_i^{p-1} - I'(x_i) = 0, \quad i=1,2,
\end{equation*}
we see that for $e_0 \in (0,1)$, 
\begin{align*}
\lim_{(e,s) \to (e_0,e_0^p)} \beta_2 
&= \lim_{x_i \to e_0} \frac{I'(x_i) - q^{-1}(\beta_2)}{px_i^{p-1}} \\
&= \frac{I'(e_0)-\beta_1^c}{pe_0^{p-1}}
\end{align*}
where $i=1$ if $e_0 \in (0,(p-1)/p)$ and otherwise $i=2$. 
Also, for any $e_0 \in [0,1]$,
\begin{equation*}
 \lim_{(e,s) \to (e_0,e_0)} \beta_2 = \lim_{x_1 \to 0} \beta_2 = \infty.
\end{equation*}
Proofs of regularity for $\partial \psi/\partial e$ are 
similar so we omit them.

{\em Explicit formula when $p = 2$.} 
When $p=2$, we can explicitly solve
\begin{align*}
&\lambda x_1 + (1-\lambda)x_2 = e\\
&\lambda x_1^2 + (1-\lambda)x_2^2 = s
\end{align*}
to obtain 
\begin{equation*}
x_{1}=\frac{1-\sqrt{1-4(e-s)}}{2}
\qquad
x_{2}=\frac{1+\sqrt{1-4(e-s)}}{2}
\end{equation*}
and 
\begin{equation*}
\lambda=\frac{x_2-e}{x_{2}-x_{1}}
=\frac{\sqrt{1-4(e-s)}+1-2e}{2\sqrt{1-4(e-s)}}.
\end{equation*}
This yields
\begin{align*}
\psi(e,s)&=-\lambda I(x_{1})-(1-\lambda)I(x_{2})
\\
&=-\left(\frac{1}{2}+\frac{1-2e}{2\sqrt{1-4(e-s)}}\right)
I\left(\frac{1}{2}-\frac{1}{2}\sqrt{1-4(e-s)}\right)
\\
&\qquad
-\left(\frac{1}{2}-\frac{1-2e}{2\sqrt{1-4(e-s)}}\right)
I\left(\frac{1}{2}+\frac{1}{2}\sqrt{1-4(e-s)}\right)
\\
&=
-I\left(\frac{1}{2}+\frac{1}{2}\sqrt{1-4(e-s)}\right),
\end{align*}
where the last line uses the fact that $I$ is symmetric around $1/2$.
\end{proof}

\begin{proof}[Proof of theorem~\ref{psicanon}]
{\em Proof of part (i).} 
By Theorem~\ref{varpsicanon}, 
\begin{equation}\label{functional2}
\psi(e,\beta_2) = \sup_{{\mathcal G}_{e,\cdot}} \left[\beta_2\int_0^1 g(x)^p\,dx - \int_0^1 I(g(x))\,dx\right],
\end{equation}
where we recall ${\mathcal G}_{e,\cdot}$ 
is the set of measurable functions $g:[0,1]\to [0,1]$ satisfying
\begin{equation}\label{constraint_e}
\int_0^1 g(x)\,dx = e.
\end{equation}
Arguments similar to those in the 
proof of Theorem~\ref{psimicro} show that 
optimizers of~\eqref{functional2} exist 
and have the form 
\begin{equation}\label{form2}
g(x) = \begin{cases} y_1, & x \in A \\ y_2, & x \notin A\end{cases}
\end{equation}
where $|A| = \lambda \in [0,1]$ and $0 \le y_1 \le y_2 \le 1$. 
For such functions $g$, we have 
\begin{align*}
\beta_2\int_0^1 g(x)^p\,dx - \int_0^1 I(g(x))\,dx 
&=  \beta_2\left[\lambda y_1^p + 
(1-\lambda) y_2^p\right]- \left[\lambda I(y_1) + (1-\lambda) I(y_2)\right],\\
 \int_0^1 g(x)\,dx &= \lambda y_1 + (1-\lambda) y_2.
\end{align*}  
It is therefore enough to maximize
\begin{equation}\label{MAX2}
\beta_2\left[\lambda y_1^p + (1-\lambda) y_2^p\right] -\lambda I(y_1) - (1-\lambda)I(y_2)
\end{equation}
subject to the constraint
\begin{equation}\label{constraints2e}
\lambda y_1 + (1-\lambda)y_2 = e.
\end{equation}
If $\lambda \in \{0,1\}$ or 
$e \in \{0,1\}$, then~\eqref{constraints2e} 
shows that $g(x) \equiv e$. So assume that 
$e,\lambda \in (0,1)$. It is easy to see 
in this case that $0 < y_1 \le y_2 <1$. 
Moreover if $y_1 = y_2$ then again $g(x) \equiv e$, 
so assume $y_1 < y_2$. 
Introducing the Lagrange multiplier $\beta_1$, 
we find that
\begin{align*}
&-\lambda I'(y_1) + \lambda\beta_1  + \lambda p\beta_2 y_1^{p-1} = 0,\\
&-(1-\lambda) I'(y_2) + (1-\lambda)\beta_1  + (1-\lambda) p\beta_2 y_2^{p-1} = 0,\\
&-[I(y_1) - I(y_2)] + \beta_1(y_1-y_2) + \beta_2(y_1^p-y_2^p) = 0,
\end{align*}
and so
\begin{equation}\label{av}
 \ell'(y_1) = \ell'(y_2)=0 , \quad \ell(y_1) = \ell(y_2).
\end{equation}
If $\beta_2 \le \beta_2^c$, then there are no solutions to~\eqref{av}, 
while if $\beta_2 > \beta_2^c$, solutions 
occur precisely when $y_1 = x_1 < x_2 = y_2$ are the global 
maximizers of $\ell$ at the point $(q^{-1}(\beta_2),\beta_2)$ along $q$. 
In the latter case,~\eqref{constraints2e} implies
\begin{equation*}
x_1 \le e \le x_2.
\end{equation*}
If $e = x_1$ or $e = x_2$, 
then $g(x) \equiv e$. Since for 
$(e,\beta_2) \in U_e^c$ we have (by definition) 
$e \notin (x_1,x_2)$, we have established that 
~\eqref{psiforme} is valid in $U_e^c$.

Next, for $(e,\beta_2) \in U_e$ define
\begin{equation*}
 \lambda = \frac{x_2-e}{x_2-x_1}
\end{equation*}
and
\begin{equation*}
H(e) = \beta_{2}e^{p}-I(e) - \left(\beta_{2}\left[\lambda x_{1}^{p}
+(1-\lambda)x_{2}^{p}\right]
-\left[\lambda I(x_{1})
+(1-\lambda) I(x_{2})\right]\right).
\end{equation*}
To establish that~\eqref{psiforme} is valid in $U_e$, 
it suffices to show that $H(e) < 0$ for all $e \in (x_1,x_2)$. 
It is easy to check that $H(x_1) = H(x_2) = 0$ and 
\begin{align*}
 H'(e) &= p\beta_2 e^{p-1}-I'(e)- \frac{1}{x_2-x_1}\left(\beta_2(x_2^p-x_1^p)-
(I(x_2)-I(x_1))\right)\\
&= \ell'(e) - \beta_1 - \frac{1}{x_2-x_1}\left(\ell(x_2)-\ell(x_1)+ \beta_1(x_2-x_1)\right)\\
&= \ell'(e).
\end{align*}
Thus, $H'(x_1) = H'(x_2) = 0$. Finally, 
\begin{equation*}
H''(e) = p(p-1)\beta_2 e^{p-2} - I''(e) = \ell''(e).
\end{equation*}
From the proof of Proposition 11 in~\cite{AristoffZhu}, 
we know $\ell''(x_1) < 0$, $\ell''(x_2)<0$,
and moreover there exists $x_1 < u_1 < u_2 < x_2$ such that 
$\ell''(e) < 0$ for $e \in (x_1,u_1)\cup(u_2,x_2)$ 
while $\ell''(e) > 0$ for $e \in (u_1,u_2)$. The 
result follows.

We note that the star density can 
be computed easily as 
\begin{equation*}
s(e,\beta_{2}):=\frac{\partial}{\partial\beta_{2}}\psi(e,\beta_{2})=
\begin{cases}
e^{p}, &(e,\beta_{2})\in U_e^{c},
\\
\frac{e-x_{2}}{x_{1}-x_{2}}x_{1}^{p}
+\frac{x_{1}-e}{x_{1}-x_{2}}x_{2}^{p}, &(e,\beta_{2})\in U_e.
\end{cases}
\end{equation*}
It is clear that the star density $s(e,\beta_2)$ is continuous everywhere.

{\em Regularity of $\psi(e,\beta_2)$}. 
From the formula it is easy to see that $\psi(e,\beta_2)$ 
is analytic away from $\partial U_e$. 
Write $\psi = \psi(e,\beta_2)$ and fix $\beta_2 > \beta_2^c$. 
In the interior of $U_e^{c}$, 
\begin{equation*}
\frac{\partial\psi}{\partial e}
=p\beta_{2}e^{p-1}-I'(e),
\end{equation*}
while the interior of $U_e$,
\begin{align*}
\frac{\partial\psi}{\partial e}
&=\frac{\beta_{2}x_{1}^{p}}{x_{1}-x_{2}}-\frac{\beta_{2}x_{2}^{p}}{x_{1}-x_{2}}
-\frac{I(x_{1})}{x_{1}-x_{2}}+\frac{I(x_{2})}{x_{1}-x_{2}} \\
&= \frac{\ell(x_1)-\ell(x_2)}{x_1-x_2} - \beta_1 = -\beta_1.
\end{align*}
Observe that
\begin{equation*}
\lim_{(e,\beta_{2})\in U_e^{c},\,e\to x_{1}}\frac{\partial\psi}{\partial e}
=p\beta_{2}x_{1}^{p-1}-I'(x_{1}) = \ell'(x_1)-\beta_1 = -\beta_1.
\end{equation*}
So $\partial \psi/\partial e$ is continuous 
across $\partial U_e$ when $e < e^c$. Note that
\begin{equation}\label{zzero}
\frac{\partial^{j}\psi}{\partial e^{j}}=0,\qquad
(e,\beta_2)\in U_e ,\qquad j \ge 2.
\end{equation}
Proposition~11 
of~\cite{AristoffZhu} gives
\begin{equation*}
\lim_{(e,\beta_2)\in U_e^c,\, e \to x_1}
\partial^2 \psi(e,\beta_2)/\partial e^2 = \ell''(x_1) < 0.
\end{equation*}
Thus, $\partial^2\psi(e,\beta_2)/\partial e^2$ 
has a jump discontinuity across 
$\partial U_e$ when $e < e^c$. Analogous 
statements hold true for $e > e^c$.
Now consider the situation at 
the point $(e^c,\beta_2^c)$. Proposition~11 
of~\cite{AristoffZhu} gives
\begin{equation*}
\lim_{(e,\beta_2) \in U_e^c, \, e \to e^c}\frac{\partial^{4}\psi}{\partial e^{4}}
=\ell^{(4)}(e^c)<0.
\end{equation*}
From~\eqref{zzero} we see that $\partial^4 \phi/\partial e^4$ 
is discontinuous at $(e^c,\beta_2^c)$.

{\em Proof of part (ii).} 
By Theorem~\ref{varpsicanon}, 
\begin{equation}\label{functional3}
\psi(\beta_1,s) = \sup_{{\mathcal G}_{e,\cdot}} \left[\beta_1\int_0^1 g(x)\,dx - \int_0^1 I(g(x))\,dx\right]
\end{equation}
where we recall ${\mathcal G}_{\cdot,s}$ 
is the set of measurable functions $g:[0,1]\to [0,1]$ satisfying
\begin{equation}\label{constraint_s}
\int_0^1 g(x)^p\,dx = s.
\end{equation}
Arguments analogous to those in the proof of part {\em (i)} 
show that $\psi(\beta_1,s)$ has the formula~\eqref{psiforms}.

We note that the limiting edge density has the formula
\begin{equation*}
e(\beta_{1},s):=\frac{\partial}{\partial\beta_{1}}\psi(\beta_{1},s)=
\begin{cases}
s^{\frac{1}{p}} &(\beta_{1},s)\in U_s^{c},
\\
\frac{s-x_{2}^{p}}{x_{1}^{p}-x_{2}^{p}}x_{1}
+\frac{x_{1}^{p}-s}{x_{1}^{p}-x_{2}^{p}}x_{2} &(\beta_{1},s)\in U_s.
\end{cases}
\end{equation*}
It is easy to see that $e(\beta_{1},s)$ is continuous everywhere.

{\em Regularity of $\psi(\beta_1,s)$}. 
From the formula it is easy to see that $\psi(\beta_1,s)$ 
is analytic away from $\partial U_s$.
Write $\psi = \psi(\beta_1,s)$ and fix $\beta_1 < \beta_1^c$. 
In the interior of $U_s^{c}$, 
\begin{equation*}
\frac{\partial\psi}{\partial s}
=\frac{1}{p}\beta_{1}s^{\frac{1}{p}-1}-\frac{1}{p}s^{\frac{1}{p}-1}I'(s^{\frac{1}{p}}),
\end{equation*}
and in the interior of $U_s$,
\begin{align*}
\frac{\partial\psi}{\partial s}
&=\frac{\beta_{1}x_{1}}{x_{1}^{p}-x_{2}^{p}}-\frac{\beta_{1}x_{2}}{x_{1}^{p}-x_{2}^{p}}
-\frac{I(x_{1})}{x_{1}^{p}-x_{2}^{p}}+\frac{I(x_{2})}{x_{1}^{p}-x_{2}^{p}}
 \\
 &= \frac{\ell(x_1)-\ell(x_2)}{x_1^p-x_2^p} - \beta_2 = -\beta_2.
\end{align*}
Note that
\begin{align*}
\lim_{(\beta_{1},s)\in U_s^{c},\,s\to x_{1}^{p}}\frac{\partial\psi}{\partial s}
&=\frac{1}{p}\beta_{1}x_{1}^{1-p}-\frac{1}{p}x_{1}^{1-p}I'(x_{1})
\\
&= \frac{x_1^{1-p}}{p}\left(\ell'(x_1)-p\beta_2 x_1^{p-1}\right) 
= -\beta_2.
\end{align*}
So $\partial \psi/\partial s$ is continuous across 
$\partial U_s$ when $s < s^c$. Note that 
\begin{equation}\label{zzero2}
\frac{\partial^{j}\psi}{\partial s^{j}}=0,\qquad
(\beta_{1},s)\in U_s, \qquad j \ge 2.
\end{equation}
In the computations below, let $(\beta_{1},s)\in U_s^{c}$
and $t=s^{\frac{1}{p}}$. Note that
\begin{equation*}
\frac{\partial\psi}{\partial s}=[\beta_{1}-I'(t)]\frac{\partial t}{\partial s}
=[\ell'(t)-\beta_{2}pt^{p-1}]\frac{\partial t}{\partial s},
\end{equation*}
and
\begin{align}\begin{split}\label{AAA}
\frac{\partial^{2}\psi}{\partial s^{2}}
&=[\ell'(t)-\beta_{2}pt^{p-1}]\frac{\partial^{2}t}{\partial s^{2}}
+\left[\ell''(t)-\beta_{2}p(p-1)t^{p-2}\right]
\left(\frac{\partial t}{\partial s}\right)^{2}
\\
&=\ell'(t)\frac{\partial^{2}t}{\partial s^{2}}
+\ell''(t)\left(\frac{\partial t}{\partial s}\right)^{2}.\end{split}
\end{align}
Using~\eqref{AAA} and Proposition~11 of~\cite{AristoffZhu}, 
we get 
\begin{equation*}
\lim_{(\beta_{1},s)\in U_s^{c},\,s\to x_{1}^{p}}\frac{\partial^2\psi}{\partial s^2}
= \ell''(x_1)\left(\frac{1}{p}x_1^{1-p}\right)^2 < 0.
\end{equation*}
Comparing with~\eqref{zzero2}, we see that 
$\partial^2\psi/\partial s^2$ has a 
jump discontinuity across $\partial U_s$ 
for $s < s^c$. Analogous results hold for  
$s > s^c$. 
Now consider the situation at 
the point $(\beta_1^c,s^c)$. From~\eqref{AAA},
\begin{equation*}
\frac{\partial^{3}\psi}{\partial s^{3}}
=\ell'(t)\frac{\partial^{3}t}{\partial s^{3}}
+\ell'''(t)\left(\frac{\partial t}{\partial s}\right)^{3}
+3\ell''(t)\frac{\partial t}{\partial s}\frac{\partial^{2}t}{\partial s^{2}},
\end{equation*}
and
\begin{align*}
\frac{\partial^{4}\psi}{\partial s^{4}}
&=4\ell''(t)\frac{\partial t}{\partial s}\frac{\partial^{3}t}{\partial s^{3}}
+\ell'(t)\frac{\partial^{4}t}{\partial s^{4}}
+\ell^{(4)}(t)\left(\frac{\partial t}{\partial s}\right)^{4}
\\
&\qquad\qquad\qquad
+6\ell'''(t)\left(\frac{\partial t}{\partial s}\right)^{2}\frac{\partial^{2}t}{\partial s^{2}}
+3\ell''(t)\left(\frac{\partial^{2}t}{\partial s^{2}}\right)^{2}.
\end{align*}
As $s\to s^c$, $t\to e^c$
and since $\ell'(e^c)=\ell''(e^c)=\ell'''(e^c)=0$, $\ell^{(4)}(e^c) < 0$,
we have
\begin{equation*}
\lim_{s\to s^c}\frac{\partial^{2}\psi}{\partial s^{2}}
=\lim_{s\to s^c}\frac{\partial^{3}\psi}{\partial s^{3}}=0,
\end{equation*}
while Proposition 11 of~\cite{AristoffZhu} gives
\begin{equation*}
\lim_{s\to s^c}\frac{\partial^{4}\psi}{\partial s^{4}}
=\lim_{s\to s^c}\ell^{(4)}(t)\left(\frac{\partial t}{\partial s}\right)^{4}
=\ell^{(4)}(e^c)\left(\frac{(e^c)^{1-p}}{p}\right)^{4} < 0.
\end{equation*}

\end{proof}

The next result concerns the curve $\beta_2 = q(\beta_1)$ and the shapes of
$U_e$ and $U_s$. 

\begin{proposition}\label{prop}
(i) The curve $\beta_2 = q(\beta_1)$ is analytic 
in $\beta_1 < \beta_1^c$. 

(ii) For any $\beta_{1}<\beta_{1}^{c}$, $\frac{\partial x_{1}}{\partial\beta_{1}}>0$ and
$\frac{\partial x_{2}}{\partial\beta_{1}}<0$. Moreover,
\begin{align*}
&\lim_{\beta_{1}\rightarrow\beta_{1}^{c}}\frac{\partial x_{1}}{\partial\beta_{1}}=+\infty,
\qquad
\lim_{\beta_{1}\rightarrow-\infty}\frac{\partial x_{1}}{\partial\beta_{1}}=0,
\\
&\lim_{\beta_{1}\rightarrow\beta_{1}^{c}}\frac{\partial x_{2}}{\partial\beta_{1}}=-\infty,
\qquad
\lim_{\beta_{1}\rightarrow-\infty}\frac{\partial x_{2}}{\partial\beta_{1}}=0.
\end{align*}
For any $\beta_{2}>\beta_{2}^{c}$, $\frac{\partial x_{1}}{\partial\beta_{2}}<0$ and
$\frac{\partial x_{2}}{\partial\beta_{2}}>0$. Moreover,
\begin{align*}
&\lim_{\beta_{2}\rightarrow\beta_{2}^{c}}\frac{\partial x_{1}}{\partial\beta_{2}}=-\infty,
\qquad
\lim_{\beta_{2}\rightarrow+\infty}\frac{\partial x_{1}}{\partial\beta_{2}}=0,
\\
&\lim_{\beta_{2}\rightarrow\beta_{2}^{c}}\frac{\partial x_{2}}{\partial\beta_{2}}=+\infty,
\qquad
\lim_{\beta_{2}\rightarrow+\infty}\frac{\partial x_{2}}{\partial\beta_{2}}=0.
\end{align*}
\end{proposition}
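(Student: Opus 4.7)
The plan is to use the analytic implicit function theorem for part (i) and implicit differentiation plus a local expansion for part (ii). For part (i), regard the phase transition curve as the locus in $(x_1,x_2,\beta_1,\beta_2)$-space cut out by the three real-analytic equations
\begin{equation*}
F_1 := \ell'(x_1;\beta_1,\beta_2)=0, \quad F_2 := \ell'(x_2;\beta_1,\beta_2)=0, \quad F_3 := \ell(x_1;\beta_1,\beta_2)-\ell(x_2;\beta_1,\beta_2)=0,
\end{equation*}
and compute the Jacobian of $(F_1,F_2,F_3)$ with respect to $(x_1,x_2,\beta_2)$. Because $\ell'(x_1)=\ell'(x_2)=0$ on the curve, the third row of the Jacobian is $(0,0,x_1^p-x_2^p)$, and expanding along it gives determinant $(x_1^p-x_2^p)\ell''(x_1)\ell''(x_2)$, which is nonzero since $\ell''(x_i)<0$ at the local maxima and $x_1<x_2$. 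The analytic implicit function theorem then yields analytic functions $\beta_1\mapsto (x_1,x_2,q(\beta_1))$.

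For the sign statements in part (ii), differentiate $\ell'(x_i;\beta_1,q(\beta_1))=0$ in $\beta_1$ to obtain
\begin{equation*}
\frac{\partial x_i}{\partial \beta_1} = -\frac{1+p\,x_i^{p-1}q'(\beta_1)}{\ell''(x_i)}.
\end{equation*}
Apply the mean value theorem to $x\mapsto x^p$ to get $x_2^p-x_1^p=p\xi^{p-1}(x_2-x_1)$ with $\xi\in(x_1,x_2)$, so the formula from Theorem~\ref{trans_curve_2} yields $1+p\,x_i^{p-1}q'(\beta_1)=1-(x_i/\xi)^{p-1}$, which is positive for $i=1$ (since $x_1<\xi$) and negative for $i=2$ (since $x_2>\xi$). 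Combined with $\ell''(x_i)<0$, this gives $\partial x_1/\partial\beta_1>0$ and $\partial x_2/\partial\beta_1<0$. The statements about $\partial x_i/\partial\beta_2$ follow via the chain rule $\partial x_i/\partial\beta_2=(\partial x_i/\partial\beta_1)/q'(\beta_1)$ with $q'<0$; this also converts each limit in $\beta_1$ into a limit in $\beta_2$.

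The easy limits are as $\beta_1\to-\infty$: from $x_1\to 0$, $x_2\to 1$ and the explicit form $\ell''(x)=p(p-1)\beta_2 x^{p-2}-1/(x(1-x))$, the denominator $|\ell''(x_i)|\to\infty$ while the numerator $1-(x_i/\xi)^{p-1}$ remains bounded (one also verifies $x_1\sim e^{\beta_1}$, $1-x_2\sim e^{-\cdot}$ from $\ell'(x_i)=0$), so $\partial x_i/\partial \beta_1\to 0$. The delicate limits are at $\beta_1\to\beta_1^c$, where both numerator and denominator vanish: $\ell''(x^c;\beta_1^c,\beta_2^c)=0$ by direct substitution, and a short calculation shows $\ell'''(x^c;\beta_1^c,\beta_2^c)=p^3(p-2)/(p-1)^2+p^2/(p-1)^2-p^2=0$ as well, while $\ell^{(4)}(x^c;\beta_1^c,\beta_2^c)<0$ by Proposition~11 of~\cite{AristoffZhu}. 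Moreover $1+p(x^c)^{p-1}q'(\beta_1^c)=0$. Expanding $\ell'(x;\beta_1,q(\beta_1))$ to leading order around $(x^c,\beta_1^c)$ and using $\beta_2-\beta_2^c=q'(\beta_1^c)(\beta_1-\beta_1^c)+O((\beta_1-\beta_1^c)^2)$, the dominant balance is between a term $C(\beta_1-\beta_1^c)(x-x^c)$, where $C=p(p-1)(x^c)^{p-2}q'(\beta_1^c)<0$, and the cubic term $\tfrac{1}{6}\ell^{(4)}(x^c)(x-x^c)^3$. Setting this equal to zero yields the two nontrivial roots $x_i-x^c=\pm\sqrt{-6C(\beta_1-\beta_1^c)/\ell^{(4)}(x^c)}$, so $|x_i-x^c|\asymp|\beta_1-\beta_1^c|^{1/2}$ and hence $|\partial x_i/\partial\beta_1|\asymp|\beta_1-\beta_1^c|^{-1/2}\to\infty$; the signs determined in the previous step fix the direction of divergence.

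The main obstacle is the critical-point limit. Because the Jacobian used in part (i) becomes singular at $(\beta_1^c,\beta_2^c)$ (both $\ell''(x_i)\to 0$), one cannot simply plug in; and because $\ell''$, $\ell'''$ and the numerator in the implicit-differentiation formula all vanish simultaneously there, one must go to fourth order in $x-x^c$ and identify the correct balance of orders between $x-x^c$ and $\beta_1-\beta_1^c$. The key input that makes the balance possible is $\ell^{(4)}(x^c;\beta_1^c,\beta_2^c)<0$ from~\cite{AristoffZhu}; without it one could not even conclude that $x_1,x_2$ are real for $\beta_1<\beta_1^c$ near $\beta_1^c$, let alone obtain the $1/2$ scaling exponent.
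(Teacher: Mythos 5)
Your proposal is correct and its skeleton coincides with the paper's proof: analyticity via the analytic implicit function theorem (the paper does it in two steps, first getting $x_1,x_2$ analytic from $\ell''(x_i)<0$ and then $q$ from the equal-heights equation, whose $\beta_2$-derivative reduces to $x_1^p-x_2^p\neq 0$; your single $3\times 3$ Jacobian $(x_1^p-x_2^p)\ell''(x_1)\ell''(x_2)$ packages exactly the same two facts), then implicit differentiation of $\ell'(x_i;\beta_1,q(\beta_1))=0$ plus the mean value theorem for the signs, and the chain rule $\partial x_i/\partial\beta_2=(\partial x_i/\partial\beta_1)/q'(\beta_1)$ for the $\beta_2$ statements — all as in the paper. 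Note that both arguments need the strict inequality $\ell''(x_i)<0$ along the curve away from the endpoint; the paper imports this from the proof of Proposition~11 of \cite{AristoffZhu}, and you should cite it rather than treat it as automatic from $x_i$ being a local maximizer. The one genuinely different step is the critical limit $\beta_1\to\beta_1^c$: the paper applies L'H\^opital's rule to the quotient $\bigl(1+pq'x_1^{p-1}\bigr)/(-\ell''(x_1))$, whereas you extract the Puiseux scaling $|x_i-x^c|\asymp|\beta_1^c-\beta_1|^{1/2}$ from a dominant balance using $\ell''(x^c)=\ell'''(x^c)=0$, $\ell^{(4)}(x^c)<0$ and $1+p(x^c)^{p-1}q'(\beta_1^c)=0$; this buys more — an explicit rate, not just divergence — and is arguably more transparent than the paper's terse L'H\^opital step. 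Two small cautions there: (a) writing $\beta_2-\beta_2^c=q'(\beta_1^c)(\beta_1-\beta_1^c)+O((\beta_1-\beta_1^c)^2)$ presumes control of $q''$ up to the endpoint, which is not known a priori (only $q'(\beta_1)\to -1/(p(x^c)^{p-1})$ follows from Theorem~\ref{trans_curve_2} and \eqref{x1_lims}); this is harmless if you instead subtract the two equations $\ell'(x_1)=0$ and $\ell'(x_2)=0$, which eliminates all $x$-independent terms and yields $u_1^2+u_1u_2+u_2^2\approx -6p(p-1)(x^c)^{p-2}\,(\beta_2-\beta_2^c)/\ell^{(4)}(x^c)$ with only $\beta_2-\beta_2^c=q'(\beta_1^c)(\beta_1-\beta_1^c)+o(|\beta_1-\beta_1^c|)$ needed; (b) the final ``hence $|\partial x_i/\partial\beta_1|\asymp|\beta_1-\beta_1^c|^{-1/2}$'' should not be obtained by differentiating an asymptotic relation — instead feed the scaling back into your own formula, where the denominator $-\ell''(x_i)\sim -\tfrac12\ell^{(4)}(x^c)(x_i-x^c)^2\asymp|\beta_1-\beta_1^c|$ and the numerator $1-(x_i/\xi)^{p-1}\asymp|x_i-x^c|\asymp|\beta_1-\beta_1^c|^{1/2}$, which also fixes the signs. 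The $\beta_1\to-\infty$ limits are handled in the same spirit as the paper's (which merely says ``easy to see''); to make your parenthetical airtight, note that boundedness of $q'$ gives at most linear growth of $\beta_2$, so $\beta_2 x_1^{p-1}\to0$ and $-\ell''(x_1)\sim 1/x_1\to\infty$ (similarly $1/(1-x_2)$ dominates $\beta_2$ for $x_2$), while the numerators stay bounded.
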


\begin{proof}
First we show that $q$ is analytic. 
There is an open V-shaped set containing the 
phase transition curve $\beta_2 = q(\beta_1)$ 
except the critical point $(\beta_1^c,\beta_2^c)$, 
inside which $\ell$ has exactly two 
local maximizers, $y_1 < y_2$. (See~\cite{Radin} 
and~\cite{AristoffZhu}.) 
It can be seen from 
the proof of Proposition 11 of~\cite{AristoffZhu} 
that $\ell''(y_1)<0$ and $\ell''(y_2)<0$ inside 
the V-shaped region. The analytic 
implicit function theorem~\cite{Krantz} then shows that 
$y_1$ and $y_2$ are analytic functions of $\beta_1$ 
and $\beta_2$ inside this region. 
Note that $q$ is defined implicitly by the 
equation 
\begin{equation*}
\beta_1 y_1 + \beta_2 y_1^p - I(y_1)- (\beta_1 y_2 + \beta_2 y_2^p - I(y_2)) = 0.
\end{equation*}
Differentiating the left hand 
side of this equation w.r.t. $\beta_2$ 
gives 
\begin{align*}
&\beta_1 \frac{\partial y_1}{\partial \beta_2} + y_1^p + 
\left(p\beta_2y_1^{p-1}-I'(y_1)\right) \frac{\partial y_1}{\partial \beta_2}\\
&\quad\quad- \left[\beta_1 \frac{\partial y_2}{\partial \beta_2} + y_2^p + 
\left(p\beta_2y_2^{p-1}-I'(y_2)\right) \frac{\partial y_2}{\partial \beta_2}\right]\\
&= y_1^p - y_2^p < 0.
\end{align*}
Another application of the analytic 
implicit function theorem implies $\beta_2 = q(\beta_1)$ 
is analytic for $\beta_1 < \beta^c$.

Now we turn to the statements involving $x_1$ and $x_2$. 
Along $\beta_2 = q(\beta_1)$, we have 
\begin{equation*}
\beta_{1}+pq(\beta_{1})x_{1}^{p-1}-\log\left(\frac{x_{1}}{1-x_{1}}\right)=0.
\end{equation*}
Differentiating with respect to $\beta_{1}$, we get
\begin{equation*}
1+pq'(\beta_{1})x_{1}^{p-1}+\left[p(p-1)q(\beta_{1})x_{1}^{p-2}
-\frac{1}{x_{1}(1-x_{1})}\right]\frac{\partial x_{1}}{\partial\beta_{1}}=0.
\end{equation*}
Therefore,
\begin{equation*}
\frac{\partial x_{1}}{\partial\beta_{1}}
=\frac{1+pq'(\beta_{1})x_{1}^{p-1}}{\frac{1}{x_{1}(1-x_{1})}-p(p-1)q(\beta_{1})x_{1}^{p-2}}
=\frac{1-\frac{x_{1}-x_{2}}{x_{1}^{p}-x_{2}^{p}}px_{1}^{p-1}}{-\ell''(x_{1})}>0.
\end{equation*}
Since $\lim_{\beta_{1}\rightarrow\beta_{1}^{c}}[1+pq'(\beta_{1})x_{1}^{p-1}]
=\lim_{\beta_{1}\rightarrow\beta_{1}^{c}}[-\ell''(x_{1})]=0$, 
by L'H\^{o}pital's rule, 
\begin{equation*}
\lim_{\beta_{1}\rightarrow\beta_{1}^{c}}\frac{\partial x_{1}}{\partial\beta_{1}}
=\lim_{\beta_{1}\rightarrow\beta_{1}^{c}}\frac{pq''(\beta_{1})x_{1}^{p-1}
+p(p-1)q'(\beta_{1})x_{1}^{p-2}\frac{\partial x_{1}}{\partial\beta_{1}}}
{-p(p-1)q'(\beta_{1})x_{1}^{p-2}-\ell'''(x_{1})\frac{\partial x_{1}}{\partial\beta_{1}}},
\end{equation*}
which implies that
\begin{equation*}
\lim_{\beta_{1}\rightarrow\beta_{1}^{c}}\frac{\partial x_{1}}{\partial\beta_{1}}=+\infty.
\end{equation*}
Since $x_{1}\rightarrow 0$ as $\beta_{1}\rightarrow\beta_{1}^{c}$, it is easy to see that
\begin{equation*}
\lim_{\beta_{1}\rightarrow-\infty}\frac{\partial x_{1}}{\partial\beta_{1}}=0.
\end{equation*}
The results for $x_{2}$ can be proved using the similar methods. 
Finally, notice that
\begin{equation*}
\frac{\partial x_{i}}{\partial\beta_{2}}
=\frac{\partial x_{i}}{\partial\beta_{1}}\frac{\partial q^{-1}(\beta_{2})}{\partial\beta_{2}}
=\frac{\partial x_{i}}{\partial\beta_{1}}\frac{1}{q'(\beta_{1})}, \quad i=1,2.
\end{equation*}
Therefore the results involving $\beta_{2}$ also hold.
\end{proof}

\section*{Acknowledgements}

David Aristoff was supported in part by 
AFOSR Award FA9550-12-1-0187.
Lingjiong Zhu is grateful to his colleague Tzu-Wei Yang for helpful discussions. 
The authors are grateful for useful comments from the 
anonymous referees and associate editor.

\end{document}